\documentclass[10pt,a4paper]{article}  
\usepackage[final]{optional}
\usepackage[british,american]{babel}
\usepackage{bbm}
\usepackage{mathrsfs}

\usepackage{amsfonts, amsmath, wasysym}


\usepackage{amssymb,amsthm,
paralist
}

\usepackage{
latexsym,
nicefrac,
}



\usepackage[usenames]{color}

\usepackage{url}

\definecolor{darkgreen}{rgb}{0,0.5,0}
\definecolor{darkred}{rgb}{0.7,0,0}
\usepackage[colorlinks, 
citecolor=darkgreen, linkcolor=darkred
]{hyperref}
\usepackage{esint}
\usepackage{bibgerm}
\usepackage[normalem]{ulem}

\bibliographystyle{geralpha}

\textwidth=145mm   
\textheight=228mm
\topmargin=-0.4in
\oddsidemargin=+0.2in
\evensidemargin=+0.2in


\parindent=0pt
\parskip=10pt

\theoremstyle{plain}
\newtheorem{lemma}{Lemma}[section]
\newtheorem{thm}[lemma]{Theorem}

\theoremstyle{definition}
\newtheorem{defn}[lemma]{Definition}

\newtheorem{rmk}[lemma]{Remark}

\setlength{\unitlength}{1mm}      


\numberwithin{equation}{section}

\newcommand{\m}{\mathcal{M}}

\newcommand{\cb}{\mathcal{B}}

\newcommand{\ci}{\mathcal{I}}
\newcommand{\cl}{\mathcal{L}}

\newcommand{\cu}{\mathcal{U}}



\newcommand{\pl}[2]{{\frac{\partial #1}{\partial #2}}}


\newcommand{\ga}{\gamma}

\newcommand{\de}{\delta}
\newcommand{\om}{\omega}
\newcommand{\Om}{\Omega}
\newcommand{\ka}{\kappa}
\newcommand{\la}{\lambda}

\newcommand{\si}{\sigma}

\newcommand{\Si}{\Sigma}

\newcommand{\ep}{\varepsilon}
\newcommand{\Ups}{\Upsilon}

\newcommand{\R}{\ensuremath{{\mathbb R}}}
\newcommand{\N}{\ensuremath{{\mathbb N}}}





\newcommand{\downto}{\downarrow}
\newcommand{\upto}{\uparrow}

\newcommand{\grad}{\nabla}

\newcommand{\union}{\cup}


\DeclareMathOperator{\inj}{inj}

\newcommand{\norm}[1]{\Vert#1\Vert}  


\newcommand{\beq}{\begin{equation}}
\newcommand{\eeq}{\end{equation}}
\newcommand{\beqs}{\begin{equation}}
\newcommand{\eeqs}{\end{equation}}
\newcommand{\beqa}{\begin{equation}\begin{aligned}}
\newcommand{\eeqa}{\end{aligned}\end{equation}}
\newcommand{\beqas}{\begin{equation}\begin{aligned}}
\newcommand{\eeqas}{\end{aligned}\end{equation}}
\newcommand{\brmk}{\begin{rmk}}
\newcommand{\ermk}{\end{rmk}}
\newcommand{\partref}[1]{\hbox{(\csname @roman\endcsname{\ref{#1}})}}
\newcommand{\half}{\frac{1}{2}}


\newcommand{\lie}{\mathcal{L}}


\newcommand*\dist{\mathop{\mathrm{dist}}\nolimits}

\newcommand*\arsinh{\mathop{\mathrm{arsinh}}\nolimits}

\newcommand*\ddt{\frac{d}{dt}}

\newcommand{\pt}{\partial_t}
\newcommand{\M}{\ensuremath{{\mathcal M}}_{-1}}

\newcommand{\abs}[1]{\vert#1\vert}



\newcommand{\Col}{\mathcal{C}}

\newcommand{\thin}{\text{-thin}}
\newcommand{\thick}{\text{-thick}}

\title{{\sc
horizontal curves of hyperbolic metrics
}
\\ 
}
\author{ Melanie Rupflin and Peter M. Topping}
\date{\today}


\begin{document}
\maketitle

\begin{abstract}
We analyse the fine convergence properties of one parameter families of hyperbolic metrics that move always in a horizontal direction, i.e. orthogonal to the action of diffeomorphisms. Such families arise naturally in the study of general curves of metrics on surfaces,
and in one of the gradients flows for the harmonic map energy.
\end{abstract}

\section{Introduction}

It has been extensively studied how a general sequence of hyperbolic metrics on a fixed closed oriented surface $M$ can degenerate. In general, the length of the shortest closed geodesic will converge to zero and the surface will stretch an infinite amount as it develops a long thin collar around each short closed geodesic, as described precisely by the classical Collar Lemma that we recall in an appropriate form in the appendix (Lemma \ref{lemma:collar}). A differential geometric form of the Deligne-Mumford compactness theorem \ref{Mumford} tells us that \emph{modulo diffeomorphisms} the closed hyperbolic surfaces converge to a complete, possibly noncompact, hyperbolic surface with cusp ends, after passing to a subsequence.

In this paper, we are concerned with the convergence of a smooth one-parameter family of hyperbolic metrics $g(t)$, $t\in [0,T)$, as $t\upto T$. It is natural to consider such families that move orthogonally to the 
variations by diffeomorphisms, i.e. so-called horizontal curves, as we clarify now. Writing $\M$ for the space of hyperbolic metrics on $M$, i.e. the metrics of constant Gauss curvature $-1$, we recall 
(cf. \cite{tromba})
that each tangent space $T_g\M$ enjoys the $L^2$ decomposition 
$$
\{Re(\Psi):\Psi\text{ is a holomorphic quadratic differential on }(M,g)\}\oplus
\{\lie_X g:X\in\Gamma(TM)\},$$
which motivates the following standard definition.
\begin{defn}
\label{horiz_def}
Let $M$ be a smooth closed oriented surface of genus at least $2$, and 
let $g(t)$ be a smooth one-parameter family of  hyperbolic metrics on $M$ for $t$ within some interval $I\subset\R$.
We say that $g(t)$ is a \emph{horizontal curve} if for each $t\in I$, there exists a holomorphic quadratic differential $\Psi(t)$ such that $\pl{g}{t}=Re(\Psi)$. 
\end{defn}

Every smooth one-parameter family $\tilde g(t)$ of metrics on such $M$ has some horizontal curve $g(t)$ at its heart as we now explain.
First, we can write $\tilde g(t)$ uniquely as 
$\tilde g(t)=e^{v(t)} \hat g(t)$ for a smooth curve of hyperbolic metrics $\hat g(t)$ and a smooth one-parameter family of functions $v(t):M\to\R$. The resulting curve $\hat g(t)$ of hyperbolic metrics in turn can then be written uniquely as 
$\hat g(t)=f_t^* g(t)$ for a horizontal curve $g(t)$ 
and a smooth family of diffeomorphisms $f_t:M\to M$ with $f_0$ the identity, hence giving a natural decomposition 
\beq
\label{gen_curve_decomp}
\tilde g(t)=e^{v(t)}f_t^*g(t),
\eeq
into
a horizontal curve $g(t)$, conformal changes and pull-backs by diffeomorphisms. 

We note that while we may also decompose families of metrics on surfaces of lower genus, the horizontal part of such families is trivial to analyse as it is constant (for genus $0$) or described in terms of a curve that is contained in an explicit 2 dimensional manifold (for genus $1$).

A horizontal curve $g(t)$  can be projected down to a path in Teichm\"uller space whose
length with respect to the Weil-Petersson metric over a range $t\in [s,T)$ is given (up to normalisation) by 
\beq
\label{length_def}
\cl(s):=\int_s^T\|\pt g(t)\|_{L^2(M,g(t))}dt\in [0,\infty].
\eeq
Clearly, to have any hope of any reasonable convergence of a horizontal curve $g(t)$ as $t\upto T$, we must ask that its length is \emph{finite}, equivalently that $\cl(s)\downto 0$ as $s\upto T$.
By the incompleteness of the Weil-Petersson metric on
Teichm\"uller space on surfaces of genus at least $2$, the curve having finite length does not rule out degeneration of the metric, in contrast to the analogous situation on a torus.

Even for such  horizontal curves of finite length, however close $t<T$ has got to $T$ the surface will still have infinitely much stretching to do, in general.
This paper is dedicated to proving the following main convergence result for finite-length horizontal curves, without any modification by diffeomorphisms. The result extracts a smooth complete limit on a subset of $M$, and precisely describes where on this subset the metric $g(t)$ has essentially settled down to its limit, and where any infinite amount of remaining stretching must occur.

\begin{thm}
\label{convergence_of_g2}
Let $M$ be a closed oriented surface of genus $\ga\geq 2$,
and suppose $g(t)$ is a smooth horizontal curve in $\m_{-1}$, for $t\in [0,T)$,  
with finite length $\cl(0)<\infty$.
Then there exist a nonempty open subset $\cu\subset M$, 
whose complement has $\ka\in\{0,\ldots,3(\ga-1)\}$ connected components,
and
a complete hyperbolic metric $h$ on $\cu$ for which
$(\cu,h)$ is of finite volume, with cusp ends, and is conformally a  disjoint union of 
$m\in\{1,\ldots,2(\ga-1)\}$
closed Riemann surfaces (of genus strictly less than that of $M$ if $\cu$ is not the whole of $M$)
with a total of $2\ka$ punctures, such that
$$g(t)\to h$$
smoothly locally on $\cu$.
Moreover, defining a function $\ci:M\to[0,\infty)$ by
$$\ci(x)=\left\{
\begin{aligned}
& \inj_h(x) &\quad &  \text{on }\cu\\
& 0 & \quad &  \text{on }M\backslash \cu,
\end{aligned}
\right.
$$
we have $\inj_{g(t)}\to \ci$ uniformly on $M$ as $t\upto T$, and indeed that
\beq
\label{unif_conv}
\left\|[\inj_{g(t)}]^\half-\ci^\half\right\|_{C^0}\leq K_0\cl(t)\to 0\qquad\text{as }t\upto T
\eeq
where $K_0$ depends only on the genus of $M$ (and is determined in Lemma \ref{rootinjevol2}). 
Furthermore, for  any $k\in\N$ and 
$\de>0$, if we take $t_0\in [0,T)$ sufficiently large so that 
$(2K_0 \cl(t_0))^2< \de$, and any $t\in [t_0,T)$,
then
\beq
\label{h_est_lem_h}
\|g(t)-h\|_{C^k(\de\thick(\cu,h),h)}\leq C\de^{-\half}\cl(t),
\eeq
and for all $s\in [t_0,T)$ we have $\de\thick(M,g(s))\subset \cu$ and 
\beq
\label{h_est_lem}
\|g(t)-h\|_{C^k(\de\thick(M,g(s)),g(s))}\leq C\de^{-\half}\cl(t),
\eeq
where $C$ depends only on $k$ and the genus of $M$.
\end{thm}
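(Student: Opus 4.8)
The plan is to deduce everything from three ingredients already available: the evolution estimate of Lemma~\ref{rootinjevol2} for $[\inj_{g(t)}]^{1/2}$ along a horizontal curve, the interior estimates that control a holomorphic quadratic differential on the $\de$-thick part of a hyperbolic surface by its $L^2$ norm, and the description of the thin part given by the Collar Lemma (Lemma~\ref{lemma:collar}) together with Deligne--Mumford compactness (\ref{Mumford}). The point is that horizontality makes $\pt g$ a holomorphic object whose $L^2$ norm is exactly the integrand of $\cl$, so finite length gives time-integrable control of $\pt g$ in every $C^k$ norm \emph{on the thick part}, with no diffeomorphisms and no subsequences needed.

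\textbf{Step 1: uniform convergence of the injectivity radius.} By Lemma~\ref{rootinjevol2}, for each $x\in M$ the function $t\mapsto[\inj_{g(t)}(x)]^{1/2}$ is locally Lipschitz with derivative bounded in modulus by $K_0\|\pt g(t)\|_{L^2}$, so for $t\le s<T$,
\[
\bigl|[\inj_{g(t)}(x)]^{1/2}-[\inj_{g(s)}(x)]^{1/2}\bigr|\le K_0\!\int_t^s\!\|\pt g\|_{L^2}\,d\tau\le K_0\cl(t).
\]
Since $\cl(t)\to0$, the continuous functions $[\inj_{g(t)}]^{1/2}$ form a Cauchy net in $C^0(M)$; call the limit $\ci^{1/2}$, which is continuous and nonnegative and, letting $s\upto T$, satisfies \eqref{unif_conv}. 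Set $\cu:=\{\ci>0\}$, an open set. The fixed total area $4\pi(\ga-1)$ together with the collar structure forces a universal lower bound on $\sup_M\inj_{g(t)}$, hence $\sup_M\ci>0$ and $\cu\ne\emptyset$. The same estimate shows $\de\thick(M,g(s))\subset\cu$ whenever $2K_0\cl(s)<\sqrt{\de}$, since then $\ci^{1/2}\ge\sqrt{\de}-K_0\cl(s)>\sqrt{\de}/2>0$ on that set.

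\textbf{Step 2: smooth convergence on the thick part.} Write $\pt g(t)=Re(\Psi(t))$ with $\Psi(t)$ holomorphic on $(M,g(t))$ and $\|\Psi(t)\|_{L^2}\le c\,\|\pt g(t)\|_{L^2}$. Mean-value and Cauchy estimates for holomorphic functions, applied in $g$-normal coordinates on balls of radius comparable to the injectivity radius (where the conformal factor is universally controlled), give $\|\pt g(t)\|_{C^k(\de\thick,g)}\le C(k)\de^{-1/2}\|\pt g(t)\|_{L^2}$ on any single hyperbolic surface; this is the origin of the $\de^{-1/2}$ in \eqref{h_est_lem}, \eqref{h_est_lem_h}. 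Now fix $\de>0$ and $t_0$ with $(2K_0\cl(t_0))^2<\de$. By Step~1, for all $s,t\in[t_0,T)$ the set $\de\thick(M,g(s))$ lies in the $(\de/4)$-thick part of $(M,g(t))$, and on this set $g(t)$ and $g(s)$ are uniformly equivalent (their difference is $\int Re(\Psi)$, small in $C^0$); hence $\|\pt g(t)\|_{C^k(\de\thick(M,g(s)),g(s))}\le C(k)\de^{-1/2}\|\pt g(t)\|_{L^2}$. Integrating in $t$ shows $\{g(t)\}_{t\ge t_0}$ is Cauchy in $C^k(\de\thick(M,g(s)),g(s))$ with tail bound $\le C(k)\de^{-1/2}\cl(t)$, which is \eqref{h_est_lem}. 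Letting $\de\downto0$ and $s\upto T$ and checking agreement on overlaps produces a limit $h$ on $\bigcup_{\de,s}\de\thick(M,g(s))=\cu$, with $g(t)\to h$ in $C^k_{\mathrm{loc}}(\cu)$; passing to the limit in the curvature $\equiv-1$ shows $h$ is hyperbolic. Repeating the estimate with reference metric $h$ (now using that $\de\thick(\cu,h)\subset(\de/4)$-thick part of $(M,g(t))$, valid since $\inj_{g(t)}\to\ci=\inj_h$ on $\cu$ by Step~1) yields \eqref{h_est_lem_h}.

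\textbf{Step 3: structure of the limit and main obstacle.} For $t$ near $T$, the Collar Lemma writes the thin part of $(M,g(t))$ as a disjoint union of at most $3(\ga-1)$ standard collars around simple closed geodesics $\si_j(t)$ of lengths $\ell_j(t)$; since $\inj_{g(t)}$ equals $\ell_j(t)/2$ along $\si_j(t)$, Step~1 gives $\ell_j(t)\to\ell_j(T)\ge0$. Let $\ka\le3(\ga-1)$ be the number of $j$ with $\ell_j(T)=0$: the pinching collars are exactly the $\ka$ components of $M\setminus\cu$, while elsewhere the convergence of Step~2 applies. Comparing the explicit standard collar (half-width $\to\infty$ as $\ell\to0$) with the standard cusp shows $(\cu,h)$ is isometric near each end to a half-infinite hyperbolic cusp — two per pinching collar — so $(\cu,h)$ is complete with $2\ka$ cusp ends and $\Vol(\cu,h)\le\liminf\Vol(M,g(t))=4\pi(\ga-1)<\infty$; by classical uniformization it is conformally a disjoint union of closed Riemann surfaces with $2\ka$ punctures total, and Deligne--Mumford compactness confirms these agree with the geometric limits along subsequences. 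Finally, cutting $M$ along the $\ka$ distinct essential simple closed curves $\si_j$ leaves $m$ pieces, each of Euler characteristic $\le-1$ summing to $2-2\ga$, so $1\le m\le2(\ga-1)$, and when $\ka\ge1$ every piece is a proper incompressible subsurface, hence of genus strictly less than $\ga$. The main obstacle is Step~2: obtaining genuine, rate-quantified $C^k$ convergence on the thick part while the reference metric, the thick region, and the underlying conformal structure all drift with $t$; it is precisely the interlocking of the $\sqrt{\inj}$ estimate (Step~1) with the interior bounds for quadratic differentials that renders these drifts harmless, and this is where horizontality is indispensable.
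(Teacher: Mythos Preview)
Your outline follows the paper's strategy closely: Step~1 is exactly the paper's opening use of Lemma~\ref{rootinjevol2}, Step~2 corresponds to Lemma~\ref{lemma:Ck-horiz} (built on the elliptic estimate Lemma~\ref{yaba_lemma}), and Step~3 matches the paper's appeal to collar geometry and Theorem~\ref{Mumford}.

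One point in Step~2 is under-justified. To get $\|\pt g(t)\|_{C^k(\de\thick(M,g(s)),g(s))}\le C\de^{-1/2}\|\pt g(t)\|_{L^2}$ you must compare the $C^k$ norm taken with respect to $g(s)$ to the one with respect to $g(t)$, and your parenthetical ``their difference is $\int Re(\Psi)$, small in $C^0$'' only handles $k=0$. For $k\ge 1$ the covariant derivatives $\nabla_{g(s)}^l$ and $\nabla_{g(t)}^l$ differ by expressions in the Christoffel symbols and their derivatives, which require $C^k$ (not $C^0$) control of $g(t)-g(s)$ with respect to a fixed metric---precisely the quantity you are trying to bound. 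The paper closes this loop with a Gronwall argument (Lemma~\ref{tensor_evolve_lem}): differentiate $t\mapsto|\Omega|_{C^k(g(t))}(x)$ and bound the result by $C|\Omega|_{C^k(g(t))}|\pt g|_{C^k(g(t))}$, then feed in the elliptic estimate for the second factor and integrate. You have all the pieces for this, but the passage is not free and should be made explicit.

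Two minor differences worth noting: the paper proves completeness of $(\cu,h)$ via a thick--thin distance lemma (Lemma~\ref{thickthindist}) rather than by your direct collar-to-cusp comparison, and it identifies the $\ka$ components of $M\setminus\cu$ by observing that the sets $M\setminus M^0(t)$ are nested and eventually have exactly $\ka$ components, rather than by tracking the moving geodesics $\si_j(t)$. Both of your alternatives are viable but need a little more care, since the collars and their core geodesics are themselves drifting in $M$ as $t$ varies.
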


Implicit above is the fact that because we are working on hyperbolic surfaces, 
the function 
$x\mapsto \inj_{g(t)}(x)$ is a continuous function for each $t\in [0,T)$, and thus the convergence of \eqref{unif_conv} is genuinely $C^0$ convergence of continuous functions to a continuous limit. (See also 
Remark \ref{si_rmk}.)
The definition of the $C^k$ norm is made precise in \eqref{Ck_notation2}.

Theorem \ref{convergence_of_g2} is particularly useful in the case that the horizontal curve degenerates, i.e. when $\liminf_{t\upto T}\inj_{g(t)}(M)=0$,
or equivalently when $\cu$ is not the whole of $M$.
In this case,
a significant aspect of Theorem \ref{convergence_of_g2} is that if $\de(t)$ converges to zero more slowly than $\cl(t)^2$ as $t\upto T$, in the sense that $\frac{\cl(t)^2}{\de(t)}\to 0$ as $t\upto T$, then $g(t)$ must have settled down to its limit on the $\de(t)$-thick part by time $t$ (the `solid' part). The infinite amount of subsequent stretching required to pinch a collar would then be going on purely on the $\de(t)$-thin part (the `liquid' part).

This type of control can be considered an unusual perspective on the very familiar concept of collar degeneration. Nevertheless, this perspective is fundamental when the hyperbolic surface $(M,g(t))$ serves as the domain on which we solve a PDE such as a geometric flow; in this case a departure from the usual  viewpoint modulo diffeomorphisms is necessary. 
A first instance of this occurs in our work on the Teichm\"uller harmonic map flow, which is a flow that takes a map $u$ from a surface $(M,g)$ to a general closed Riemannian manifold $(N,G)$ and flows both $u$ and $g$ in order to reduce the harmonic map energy $E(u,g)$ as quickly as possible, see \cite{RT} for details. The way the flow is set up, the metric $g$ always moves in a horizontal direction, and so the theory of this paper applies instantly.
Indeed, in \cite{global}, we use Theorem \ref{convergence_of_g2}
to describe the finite-time singularities of the flow for both the metric and map components. On the `solid' part of the domain, where the metric $g(t)$ has settled down near to its limit, the map $u$ will converge in a traditional sense. Meanwhile, the theory of this paper controls the `liquid' part, where all the stretching is yet to occur, in a sufficiently strong manner that the map looks like a harmonic map at every scale and from every viewpoint.

Because of the decomposition \eqref{gen_curve_decomp}
 the results of the present paper are applicable not only in situations where a curve of metrics 
moves purely in horizontal directions, but also for general curves of metrics on surfaces.   
 Some of the technology we develop here has already been applied in such a situation in \cite{buz_rup},
 where the theory of the present paper controls the horizontal part while different techniques control the conformal
deformations and modifications by diffeomorphisms.
A more recent application of this type can be found in \cite{lothar}.

Other results from this paper that are used elsewhere, particularly in \cite{global}, include our direct control on the injectivity radius (Lemma \ref{rootinjevol2}), our elliptic estimate for $\pt g$ (Lemma \ref{yaba_lemma}), which controls 
$|\pt{g}|_{C^k}(x)\leq C[\inj_{g(t)}(x)]^{-\half}\|\pt g\|_{L^2}$,
and our results relating the geometry at different times (Lemma \ref{lemma:Ck-horiz}).

\emph{Acknowledgements:} 
The second author was supported by 
EPSRC grant number EP/K00865X/1.

\section{Injectivity radius along horizontal curves}
\label{inj_rad_sect}

In this section, we control the evolution of the injectivity radius of a hyperbolic metric as we move in a horizontal direction at a given Weil-Petersson speed. The principal goal is to state and prove Lemma \ref{rootinjevol2} below. 
However, one of the ingredients, Lemma \ref{yaba_lemma}, will have many external applications. This latter lemma gives the sharp consequence of $g(t)$ being horizontal, by exploiting elliptic regularity to get $C^k$ control on $\pt g$ in terms of the $L^2$ norm of $\pt g$ (i.e. in terms of the speed that $g$ is moving through Weil-Petersson space) with sharp dependency on the injectivity radius.

\begin{lemma}
\label{inj_Lip_lem}
Suppose $g(t)$ is a horizontal curve on a closed oriented surface $M$, or indeed any smooth family of hyperbolic metrics. Then for each $x\in M$, the function
\beq
\label{inj_fn}
t\mapsto \inj_{g(t)}(x)
\eeq
is locally Lipschitz.
\end{lemma}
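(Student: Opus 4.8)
The plan is to show that for a fixed point $x\in M$, the injectivity radius $\inj_{g(t)}(x)$ changes at a controlled rate as $t$ varies, by comparing the lengths of geodesic loops based at $x$ for nearby metrics $g(t)$ and $g(s)$. Recall that on a hyperbolic surface, $\inj_g(x)$ equals half the length (in $g$) of the shortest geodesic loop based at $x$ (or, if the point lies in a very thin collar, is governed in a monotone way by the length of the core geodesic of that collar — but the same comparison argument will apply). So the first step is to fix $t_0\in I$ and a compact subinterval $J\subset I$ containing $t_0$ in its interior, and to establish a two-sided bound of the form $c^{-1}g(s)\leq g(t)\leq c\, g(s)$ for $s,t\in J$, with $\log c\leq C|t-s|$, where $C$ depends on $\sup_J\|\pt g\|_{\infty}$. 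This follows by integrating $\pt g$ in $t$: since $g(t)$ is smooth in $(t,x)$ jointly and $J\times M$ is compact, $\|\pt g(t)\|_{L^\infty(M,g(t))}$ is bounded on $J$, and $\partial_t(\log g(t)(v,v))$ is pointwise bounded, giving the quasi-isometry constant with Lipschitz control in $t$. (For a horizontal curve one could alternatively invoke elliptic control of $\pt g$, but smoothness alone suffices here.)

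The second step converts this uniform quasi-isometry into a comparison of injectivity radii. If $\gamma$ is the shortest $g(s)$-geodesic loop based at $x$, then as a curve it has $g(t)$-length at most $c^{1/2}$ times its $g(s)$-length, so the shortest $g(t)$-loop based at $x$ (whose length is at most the $g(t)$-length of $\gamma$) satisfies $\inj_{g(t)}(x)\leq c^{1/2}\inj_{g(s)}(x)$; by symmetry $\inj_{g(s)}(x)\leq c^{1/2}\inj_{g(t)}(x)$. Hence $|\log\inj_{g(t)}(x)-\log\inj_{g(s)}(x)|\leq \tfrac12\log c\leq \tfrac{C}{2}|t-s|$ on $J$. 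Since on the compact interval $J$ the value $\inj_{g(t)}(x)$ is bounded above and away from zero (continuity in $t$, which itself follows from the quasi-isometry comparison, plus $J$ compact), the Lipschitz bound on $\log\inj$ upgrades to a Lipschitz bound on $\inj$ itself with a constant depending only on $J$, $x$, and $\sup_J\|\pt g\|_\infty$. As $t_0$ and $J$ were arbitrary, $t\mapsto\inj_{g(t)}(x)$ is locally Lipschitz.

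The one subtlety — and the main thing to get right — is the characterization of $\inj_g(x)$ in terms of geodesic loops: on a hyperbolic surface $\inj_g(x)$ is the minimum of half the length of the shortest geodesic loop at $x$ and (half) the distance to the conjugate locus, but in curvature $-1$ there are no conjugate points, so $\inj_g(x)$ is exactly half the length of the shortest geodesic loop based at $x$. This is the standard fact (see e.g. the discussion accompanying the Collar Lemma, Lemma \ref{lemma:collar}); with it in hand, the loop-comparison above is clean and needs no case distinction between thick and thin points. A minor point to check is that the infimum defining the shortest loop is attained, which holds because $M$ is closed and the metrics are complete. I do not expect any genuine obstacle beyond recording these standard facts carefully.
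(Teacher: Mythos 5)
Your proposal is correct and takes essentially the same approach as the paper: restrict to a compact subinterval, note uniform equivalence of the metrics there, and compare the lengths of minimizing homotopically nontrivial geodesic loops based at $x$ under the two metrics. The only cosmetic difference is that you pass through $\log \inj_{g(t)}(x)$ (hence need $\inj$ bounded both above and away from zero on the compact subinterval), whereas the paper linearizes via the elementary inequality $\sqrt{z}\leq\tfrac12(z+1)$ and only needs the genus-dependent upper bound on the injectivity radius from Gauss--Bonnet (Remark \ref{inj_upper_bd}).
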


In particular, the function in \eqref{inj_fn} is differentiable for almost every $t\in I$. The following bound on its derivative is the main result of this section.

\begin{lemma}
\label{rootinjevol2}
Let $g(t)$ be a horizontal curve of hyperbolic metrics on a closed oriented surface $M$, 
for $t$ in some interval $I$.
Then there exists a constant $K_0<\infty$ depending only on the genus of $M$ such that
for any  $x\in M$ and almost all $t\in I$ (indeed, for every $t$ at which \eqref{inj_fn} is differentiable) we have 
\beq \label{est:inj-weaker-copy2}
\bigg|\ddt \left[\inj_{g(t)}(x)\right]^{\half}\bigg|\leq K_0
\norm{\partial_t g}_{L^2(M,g(t))}.
\eeq 
\end{lemma}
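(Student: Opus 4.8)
The plan is to reduce everything to two ingredients: first, a pointwise formula (or at least an inequality) for the time-derivative of $\inj_{g(t)}(x)$ in terms of $\pt g$, and second, the sharp $C^0$ control on $\pt g$ with the correct blow-up rate in the injectivity radius, which is exactly what Lemma \ref{yaba_lemma} should provide. Write $\ell(t,x)=\inj_{g(t)}(x)$; by Lemma \ref{inj_Lip_lem} it is locally Lipschitz in $t$, hence differentiable a.e. At a point $(t_0,x)$ of differentiability, the injectivity radius is realised either by half the length of a shortest closed geodesic through (or near) $x$, or by the distance to the cut locus; in the hyperbolic setting the relevant quantity when $\ell$ is small is governed by the length of the short geodesic in the collar containing $x$. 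In either description, $\ell(t,x)$ is (locally, near $t_0$) an infimum of lengths of curves measured in the metric $g(t)$, so by the first variation of length / a standard envelope argument one gets
\[
\left|\ddt \ell(t,x)\right|_{t=t_0} \leq \tfrac12 \sup_{\gamma}\int_\gamma \bigl|\pt g(t_0)\bigr|_{g(t_0)}\, ds_{g(t_0)},
\]
where the supremum runs over the (possibly non-unique) minimising geodesics realising the infimum, whose $g(t_0)$-length is comparable to $\ell(t_0,x)$. Hence $|\ddt\ell| \lesssim \ell(t_0,x)\cdot \|\pt g\|_{C^0(\cdot)}$ with the $C^0$ norm taken on a neighbourhood of the relevant geodesic.

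Next I would invoke Lemma \ref{yaba_lemma}: since $g(t)$ is horizontal, $\pt g = \Re(\Psi)$ for a holomorphic quadratic differential, and elliptic regularity gives $C^k$ bounds for $\pt g$ in terms of $\|\pt g\|_{L^2(M,g)}$ with a factor that degenerates like a negative power of the injectivity radius as one approaches the thin part. The key point is that the geodesic realising $\inj_{g(t_0)}(x)$ lies in a region where the injectivity radius is comparable to $\ell(t_0,x)$ itself (inside a collar, the injectivity radius along the core geodesic is essentially constant and equal to the core half-length), so the $C^0$ bound on $\pt g$ along that geodesic is of the form $\|\pt g\|_{C^0}\leq C\,\ell(t_0,x)^{-1/2}\,\|\pt g\|_{L^2(M,g(t_0))}$ — the precise exponent $-1/2$ being dictated by the conformal geometry of a hyperbolic cusp/collar and the fact that a holomorphic quadratic differential in the collar coordinate decays like $e^{-\text{(distance)}}$. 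Combining with the first-variation estimate,
\[
\left|\ddt \ell(t,x)\right|_{t=t_0}\;\leq\; C\,\ell(t_0,x)\cdot \ell(t_0,x)^{-1/2}\,\|\pt g\|_{L^2(M,g(t_0))}\;=\;C\,\ell(t_0,x)^{1/2}\,\|\pt g\|_{L^2(M,g(t_0))}.
\]
Dividing by $2\ell(t_0,x)^{1/2}$ and using the chain rule $\ddt[\ell^{1/2}] = \tfrac12\ell^{-1/2}\ddt \ell$ then yields $\bigl|\ddt[\ell(t,x)^{1/2}]\bigr|\leq K_0\|\pt g\|_{L^2(M,g(t_0))}$, which is exactly \eqref{est:inj-weaker-copy2}, with $K_0$ depending only on the genus through the constant in Lemma \ref{yaba_lemma} and the universal collar geometry.

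The main obstacle, and where the argument needs real care, is the first step: handling the non-smoothness of $x\mapsto\inj_{g(t)}(x)$ and the possible non-uniqueness of the minimising geodesic at the point in question. One cannot simply differentiate "the" shortest geodesic's length, because that geodesic may jump; the clean way around it is to bound $\ell(t,x)$ above and below by the lengths of \emph{fixed} competitor curves near $t_0$ and use the Lipschitz/a.e.-differentiability structure to show the difference quotient is controlled by the first variation of those fixed curves, taking a limit. A secondary subtlety is making sure the $C^0$ estimate for $\pt g$ is applied on the correct scale — i.e. certifying that the minimising geodesic genuinely sits in a region whose injectivity radius is comparable to $\ell(t_0,x)$ and not smaller — which is where one leans on the Collar Lemma (Lemma \ref{lemma:collar}) to describe the geometry of the thin part. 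Everything else is the chain rule and bookkeeping of constants depending only on $\ga$.
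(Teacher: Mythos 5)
Your proposal is correct and follows essentially the same route as the paper: the paper's proof combines the derivative formula $\ddt\inj_{g(t)}(x)\big|_{t_0}=\frac14\int_0^{2\inj_{g(t_0)}(x)}\pt g(\si',\si')$ (Lemma \ref{weaker_inj_lemma}, established exactly by the "fixed competitor curve" trick you describe) with the sharp elliptic bound of Lemma \ref{yaba_lemma}, split into the cases of $x$ lying off or inside a thin collar, and then applies the chain rule to pass to $\inj^{1/2}$. The one detail you should firm up is the justification that the injectivity radius is comparable to $\inj_{g(t_0)}(x)$ along the \emph{whole} geodesic loop $\si$ (not just near the core): the paper gets this from the uniform upper bound on $L(\si)$ (Remark \ref{inj_upper_bd}) combined with \eqref{est:compare-inj-rad} of Lemma \ref{lemma:inj-collar}, rather than from any near-constancy of the injectivity radius along the core.
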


As alluded to above, if we see each $g(t)$ as representing a point in Teichm\"uller space, then the quantity $\norm{\partial_t g}_{L^2(M,g(t))}$ on the right-hand side of 
\eqref{est:inj-weaker-copy2} is, up to normalisation, the speed of $g(t)$ with respect to the Weil-Petersson metric. 
Of course, if one restricts this estimate to points $x$ of least injectivity radius, then one recovers a very weak form of the well-known lower bound on the Weil-Peterson distance to the boundary of Teichm\"uller space as found in the work of Wolpert \cite{wolpert07}, for example. We are most concerned with the estimate for general $x$ and estimates that are specific to the differential geometric viewpoint of Teichm\"uller theory.

\brmk
\label{si_rmk}
Recall that for any point $x$ in a complete hyperbolic surface $(M,g)$ other than the entire hyperbolic plane, there exists at least one unit speed geodesic $\si:[0,2\inj_{g}(x)]\to M$ starting and ending at $x$, that is homotopically nontrivial, and minimises length over all
homotopically nontrivial curves that start and end at $x$.
\ermk

\brmk
\label{inj_upper_bd}
We will repeatedly require that the injectivity radius at every point on our closed oriented hyperbolic surfaces is bounded above depending only on the genus. This is because by Gauss-Bonnet, the total area of the surface is determined by the genus alone.
\ermk

\begin{proof}[Proof of Lemma \ref{inj_Lip_lem}]
By replacing $I$ by an arbitrary smaller \emph{compact} interval, we may assume that $g(t)$ is a Lipschitz curve in the space of metrics equipped itself with the $C^0$ norm computed with respect to an arbitrary background metric $G$ on $M$. Thus there exists $C<\infty$ such that for all $a,b\in I$ we have
$\|g(a)-g(b)\|_G\leq C|a-b|$.
Also, on this smaller compact interval, the metrics $g(t)$ will be uniformly equivalent to $G$.
By Remark \ref{si_rmk}, we can pick homotopically nontrivial unit speed geodesics $\si_a:[0,2\inj_{g(a)}(x)]\to (M,g(a))$ and $\si_b:[0,2\inj_{g(b)}(x)]\to (M,g(b))$ starting and ending at $x$,
and
\beqa
2\inj_{g(a)}(x)&=L_{g(a)}(\si_a)\leq L_{g(a)}(\si_b)
=\int_0^{2\inj_{g(b)}(x)}\left[g(a) (\si_b',\si_b')\right]^\half\\
&\leq \half\int_0^{2\inj_{g(b)}(x)}\left(g(a) (\si_b',\si_b')+1\right)
\eeqa
because $x^\half\leq \half(x+1)$ for $x>0$.
Using $g(b) (\si_b',\si_b')=1$, we find
\beqa
2\left(\inj_{g(a)}(x)-\inj_{g(b)}(x)\right)&\leq
\half\int_0^{2\inj_{g(b)}(x)}\left(g(a) (\si_b',\si_b')-1\right)\\
&\leq 
\half\int_0^{2\inj_{g(b)}(x)}\left(g(a)-g(b)\right) (\si_b',\si_b')\\
&\leq \inj_{g(b)}(x)\|g(a)-g(b)\|_{g(b)}\\
&\leq C \inj_{g(b)}(x)\|g(a)-g(b)\|_{G}\leq C|a-b|,
\eeqa
by Remark \ref{inj_upper_bd}.
By switching $a$ and $b$, we obtain the desired Lipschitz bound.
\end{proof}

Now that we have established that the injectivity radius is differentiable for almost every $t$, we give a first formula for its derivative. One should keep in mind Remark \ref{si_rmk}.

\begin{lemma}
\label{weaker_inj_lemma}
Let  $g(t)$ be a horizontal curve of hyperbolic metrics on a closed oriented surface $M$ containing a point $x$, 
and suppose the function $t\mapsto \inj_{g(t)}(x)$ is differentiable at $t_0\in I$.
Then for any unit speed geodesic $\si:[0,2\inj_{g(t_0)}(x)]\to M$ starting and ending at $x$, we have
\beq
\label{weaker_inj_formula}
\frac{d}{dt}\inj_{g(t)}(x)\bigg|_{t=t_0}=
\frac14\int_0^{2\inj_{g(t_0)}(x)}\pt g (\si',\si').
\eeq
\end{lemma}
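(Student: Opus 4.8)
The plan is to compute the derivative of $\inj_{g(t)}(x)$ by using the variational characterisation of the injectivity radius as half the length of a shortest homotopically nontrivial geodesic loop based at $x$ (Remark \ref{si_rmk}). The key point is the standard first-variation trick: the function $t\mapsto\inj_{g(t)}(x)$ is an infimum over a family of loops of a length functional, and although the minimising loop $\si=\si_{t_0}$ changes with $t$, the fact that $\si$ is a geodesic for $g(t_0)$ means the derivative in $t$ at $t=t_0$ only sees the explicit $t$-dependence of the metric, not the motion of $\si$.

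First I would fix the minimising geodesic $\si:[0,L_0]\to M$ with $L_0=2\inj_{g(t_0)}(x)$, parametrised proportionally to $g(t_0)$-arclength (so unit speed), and for $t$ near $t_0$ define the comparison quantity
\beq
\label{plan_Lfn}
\ell(t):=\int_0^{L_0}\left[g(t)(\si',\si')\right]^\half.
\eeq
Since $\si$ is a fixed loop based at $x$ that is homotopically nontrivial, it is an admissible competitor for the length-minimisation defining $\inj_{g(t)}(x)$, so $2\inj_{g(t)}(x)\leq\ell(t)$ for all $t$ near $t_0$, with equality at $t=t_0$. Differentiating \eqref{plan_Lfn} at $t_0$, and using $g(t_0)(\si',\si')=1$, gives
\beq
\label{plan_dL}
\ell'(t_0)=\frac12\int_0^{L_0}\pt g(\si',\si')\big|_{t=t_0}.
\eeq
This is clean because there is no boundary contribution and no term from varying $\si$; the only subtlety is justifying that one may differentiate under the integral sign, which follows from the smoothness of $g(t)$ in $t$ on a compact interval (cf. the uniform bounds used in the proof of Lemma \ref{inj_Lip_lem}).

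Next I would upgrade the one-sided inequality $2\inj_{g(t)}(x)\leq\ell(t)$, together with equality at $t_0$, into the equality of derivatives. Since $t\mapsto\inj_{g(t)}(x)$ is assumed differentiable at $t_0$, and $t\mapsto 2\inj_{g(t)}(x)-\ell(t)$ has a maximum (value $0$) at $t_0$, its derivative there vanishes, giving $2\frac{d}{dt}\inj_{g(t)}(x)|_{t_0}=\ell'(t_0)$. Combining with \eqref{plan_dL} yields exactly \eqref{weaker_inj_formula}. I expect the main obstacle — really the only nontrivial point — to be the differentiation-under-the-integral step and, relatedly, making sure $\si$ remains a valid competitor (homotopically nontrivial, based at $x$) for all nearby $t$; the latter is immediate since being homotopically nontrivial and based at $x$ are properties independent of the metric. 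A secondary point worth a sentence is that the formula \eqref{weaker_inj_formula} is claimed for \emph{any} such minimising geodesic $\si$, not just a preferred one: this is automatic, since the argument above applies verbatim to each minimiser, and in particular shows the right-hand side of \eqref{weaker_inj_formula} takes the same value for every unit speed $g(t_0)$-geodesic loop at $x$ of length $2\inj_{g(t_0)}(x)$.
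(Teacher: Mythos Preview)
Your proposal is correct and follows exactly the paper's approach: use Remark~\ref{si_rmk} to get $\inj_{g(t)}(x)\leq\tfrac12 L_{g(t)}(\si)$ with equality at $t_0$, deduce equality of derivatives at $t_0$, and compute the derivative of the length. The paper compresses this into three lines, omitting the explicit computation of $\ell'(t_0)$ and your remarks on differentiation under the integral and on multiple minimisers, but the argument is identical.
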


\begin{proof}
By Remark \ref{si_rmk}, for all $t\in I$, we have 
$$\inj_{g(t)}(x)\leq \half L_{g(t)}(\si),$$
with equality at least for $t=t_0$.
Therefore at $t=t_0$ the derivatives of 
$\inj_{g(t)}(x)$ and $\half L_{g(t)}(\si)$ coincide, and thus the lemma follows immediately.
\end{proof}

In order to improve \eqref{weaker_inj_formula}, we need to
understand the regularity implied by the curve $g(t)$ being horizontal.

\begin{lemma}
\label{yaba_lemma}
Let $g(t)$ be a horizontal curve of hyperbolic metrics on a closed oriented surface $M$, for $t$ in some interval $I$, and suppose $k\in \N\union\{0\}$.
Then there exists a constant $C<\infty$ depending on $k$ and the genus of $M$ such that
for any  $x\in M$ and $t\in I$ we have 
\beq
\label{yaba2}
|\pt{g}|_{C^k(g(t))}(x)\leq C[\inj_{g(t)}(x)]^{-\half}\|\pt g\|_{L^2(M,g(t))}.
\eeq
\end{lemma}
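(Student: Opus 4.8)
\textbf{Proof proposal for Lemma \ref{yaba_lemma}.}

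The plan is to exploit the fact that horizontality forces $\pt g = \Re(\Psi)$ for a holomorphic quadratic differential $\Psi$, and that $\Psi$ solves an overdetermined elliptic system (the Cauchy--Riemann equations in a local conformal coordinate), so its pointwise size and derivatives at $x$ are controlled by its $L^2$ mass on a definite ball. The first step is to reduce to a local statement: pick a conformal coordinate $z$ on the geodesic ball $B := B_{g(t)}(x, \rho)$ of radius $\rho := \inj_{g(t)}(x)$ (or a fixed fraction of it), so that $g(t) = e^{2u}|dz|^2$ with $u$ solving $\lap u = e^{2u}$; since the curvature is $-1$ and the ball has bounded radius, standard interior estimates (this is where one invokes that we are on a hyperbolic surface, cf. the Collar Lemma machinery in the appendix) give uniform bounds on $u$ and all its derivatives in terms of $\rho$ alone, on a slightly smaller ball. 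In these coordinates $\Psi = \varphi(z)\,dz^2$ with $\varphi$ holomorphic, and $|\pt g|_{g(t)}$ is, up to the conformal factor, comparable to $e^{-2u}|\varphi|$; similarly $L^2$ norms of $\pt g$ with respect to $g(t)$ convert, up to the (controlled) conformal factor, to ordinary $L^2$ norms of $\varphi$ over the Euclidean coordinate domain.

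The second step is the elliptic heart of the matter: a holomorphic function $\varphi$ on a Euclidean disc of radius $r$ satisfies, by the mean value property and Cauchy's estimates, $|\partial^j \varphi|(0) \leq C_j r^{-1-j} \|\varphi\|_{L^2(B_r)}$ for every $j$. The loss is exactly one power of length scale beyond the naive $L^2$-to-$L^\infty$ scaling, because $\varphi$ lives in real dimension $2$ and we gain nothing from the extra regularity of being holomorphic in the scaling count — this is the source of the $[\inj]^{-1/2}$ (not $[\inj]^{-1}$) in \eqref{yaba2}, once one accounts for the two-dimensional volume factor $r^2$ in $\|\varphi\|_{L^2(B_r)}^2$. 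Concretely, the scale $r$ available is comparable to $\rho = \inj_{g(t)}(x)$ when $\rho$ is small, but one must be careful: when $\inj_{g(t)}(x)$ is large (bounded above by a genus constant, by Remark \ref{inj_upper_bd}), one simply uses a fixed definite radius and the estimate is trivially of the claimed form. So the two-dimensional volume factor gives $\|\varphi\|_{L^2(B_r)} \leq C r^{-1} \cdot (r \cdot \|\varphi\|_{L^2(B_r)})$ — rearranging, the pointwise bound on $|\varphi|$ and its derivatives picks up $r^{-1}$ from Cauchy and the conversion to the intrinsic $L^2(M,g(t))$ norm supplies a factor that combines with the coordinate radius $\sim\rho$ to leave precisely $\rho^{-1/2}$.

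The third step is bookkeeping: transferring the $C^k$ bound on $\varphi$ back to a $C^k(g(t))$ bound on $\pt g = \Re(\Psi)$ requires differentiating the product $e^{-2u}\varphi$ and using the uniform control on $u$ from step one, plus converting covariant derivatives $\nabla^j$ of the tensor to coordinate derivatives via Christoffel symbols (again bounded by step one); finally one replaces $\|\varphi\|_{L^2}$ over the coordinate ball by $\|\pt g\|_{L^2(M,g(t))}$ over all of $M$, which only increases the norm. I expect the main obstacle to be step one, i.e. producing conformal coordinates on a ball whose radius is a definite fraction of $\inj_{g(t)}(x)$ together with the uniform (injectivity-radius-dependent) bounds on the conformal factor $u$ and all its derivatives: one needs the ball to be embedded and needs the PDE $\lap u = e^{2u}$ to be controlled up to the boundary at the correct scale, handling both the small-$\inj$ regime (where a rescaling argument is natural, using that the rescaled metrics converge to the flat or to the cusp/collar model) and the bounded-below regime uniformly. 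Getting the dependence on $\inj_{g(t)}(x)$ sharp — a single power of $[\inj]^{-1/2}$ with no hidden logarithms — is the delicate point, and is exactly what makes this lemma useful downstream.
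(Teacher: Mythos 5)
There is a genuine gap at the heart of the proposal, and it is exactly the point you flagged as ``delicate.'' Your strategy works at the scale of the injectivity radius: you take conformal coordinates on $B_{g(t)}(x,\rho)$ with $\rho\sim\inj_{g(t)}(x)$ and apply the Cauchy/mean-value estimate there. But this is a scaling argument, and the scaling-invariant estimate in dimension two is $|\varphi(0)|\leq Cr^{-1}\|\varphi\|_{L^2(B_r)}$, which, once you convert back to intrinsic quantities, delivers
$|\pt g|_{g(t)}(x)\leq C\,[\inj_{g(t)}(x)]^{-1}\|\pt g\|_{L^2(M,g(t))}$ — a power of $-1$, not $-\half$. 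Indeed this $\inj^{-1}$ estimate is saturated by $\varphi\equiv$ const on the injectivity-radius-sized coordinate ball, so no amount of care with the conformal factor can recover the missing $\inj^{\half}$ from a disc at that scale. Your bookkeeping in Step 2 — ``the conversion to the intrinsic $L^2(M,g(t))$ norm supplies a factor that combines with the coordinate radius $\sim\rho$ to leave precisely $\rho^{-1/2}$'' — is where the proof breaks down; there is no such factor, and the inequality you write out to justify it ($\|\varphi\|_{L^2(B_r)}\leq Cr^{-1}(r\|\varphi\|_{L^2(B_r)})$) is a tautology.

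The sharp exponent $-\half$ is not a local scaling phenomenon: it uses the global structure of the thin part in an essential way. The paper's proof works at a \emph{fixed} hyperbolic scale (a ball of $g(t)$-radius $1$), not at scale $\inj$. When $x$ lies deep in a collar, it lifts $\pt g$ to the ball $\cb$ of hyperbolic radius $1$ in the universal cover; elliptic regularity there has a uniform (genus-dependent, $\inj$-independent) constant, but the covering map $\cb\to B_{g(t)}(x,1)$ is $n$-to-one with $n\lesssim\inj^{-1}$, which introduces one factor of $\inj^{-1}$ into an $L^1$ bound. The crucial compensating gain is that $B_{g(t)}(x,1)$ sits inside the $(\pi e)\inj$-thin part of the collar, whose $g(t)$-area is of order $\inj$ (not $\inj^2$, which is what a geodesic ball of radius $\inj$ would contribute); Cauchy--Schwarz then converts $L^1$ to $L^2$ at the cost of only $\inj^{1/2}$, and $\inj^{-1}\cdot\inj^{1/2}=\inj^{-1/2}$. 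So the improvement from $\inj^{-1}$ to $\inj^{-1/2}$ is exactly the difference between the area of a small geodesic disc and the area of the whole thin collar that the disc of fixed hyperbolic radius sees. Any proof confined to a domain of geodesic radius comparable to $\inj$ — including any variant of your conformal-coordinate approach — cannot see this and therefore cannot prove the lemma as stated.
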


We clarify here that for any tensor $\Om$ defined in a neighbourhood of $x\in M$, we are writing
\beq
\label{Ck_notation}
|\Om|_{C^k(g)}(x):=\sum_{l=0}^k |\grad_{g}^{l} \Om|_g(x),
\eeq
where $\grad_g$ is the Levi-Civita connection of $g$, or its extension.
If $\Om$ is defined over an open subset containing some general subset  $K\subset M$, then we write
\beq
\label{Ck_notation2}
\|\Om\|_{C^k(K,g)}:= \sup_K |\Om|_{C^k(g)}.
\eeq

\brmk
Although we do not need it here, the proof below establishes not just \eqref{yaba2} but also the fact that
\beq
|\pt{g}|_{C^k(g(t))}(x)\leq C[\inj_{g(t)}(x)]^{-\half}\|\pt g\|_{L^2(B_{g(t)}(x,1))}.
\eeq
\ermk

\begin{proof}[Proof of Lemma \ref{yaba_lemma}]
For any $t\in I$, and any $x\in M$, we can apply elliptic regularity to deduce that $\pt g$ can be controlled pointwise in $C^k$ in terms of its local $L^1$ norm, as we now make precise.

First, recall that a closed hyperbolic surface decomposes into a union of collars $\Col$, with central geodesics of length 
$\ell\leq 2\arsinh(1)$, and the complement of the collars, on which the injectivity radius is always larger than $\arsinh(1)$, as described in Lemmata \ref{lemma:collar} and \ref{basic_inj_control}. 

Set $\nu:=\arsinh(1)$, with the understanding that it will shortly be reduced.
Suppose first that $x\in M$ does not lie in any of the collars, and so $\inj_{g(t)}(x)> \nu$.
In this case, elliptic theory (using the fact that $\pt g$ is the real part of a \emph{holomorphic} object and hence that in appropriate coordinates the components of $\partial_t g$ are given by harmonic functions) 
tells us that 
$$|\pt {g}|_{C^k(g(t))}(x)\leq C\|
\pt {g}
\|_{L^1(B_{g(t)}(x,\nu))}
\leq C\|\pt g\|_{L^2(M,g(t))},$$
%
where $C$ depends only on $k$ and the genus (and $\nu$ once it changes). Thus \eqref{yaba2} is proved in this case.

We are therefore reduced to the case that $x$ lies in a collar. Here we can use Lemma \ref{lemma:inj-collar} from the appendix, which tells us that 
we can reduce $\nu$ to some smaller universal number so that 
$\inj_{g(t)}(x)\leq \nu$ not only implies that $x$ lies in a collar, but also that it lies at least a geodesic distance $1$ from the ends of the collar. 
Note that the argument above still proves \eqref{yaba2} on the now larger set $\{x\ :\ \inj_{g(t)}(x)> \nu\}$.

Suppose therefore that we are in the remaining case that $\inj_{g(t)}(x)\leq \nu$, with this smaller $\nu$.
In this case we will pull back $\pt{g}$ 
to the ball $\cb$ of radius $1$ in the universal cover (hyperbolic space), and perform the elliptic regularity theory there.

We now claim that the cover $\Ups:\cb\to B_{g(t)}(x,1)$
sends a finite number of points, bounded above by $C/\inj_{g(t)}(x)$, with $C$ universal, to each point in the image, where we recall that $B_{g(t)}(x,1)$ is fully contained in a collar. 
To prove this claim, suppose that there are $n\geq 2$ points in $\cb$ all mapping to the same point under $\Ups$. Then there must exist a curve $\si$ in $\cb$ connecting two of these points whose image under $\Ups$ wraps $n-1$ times round the collar, and whose length is less than $2$.
(We can take the shortest geodesic from one point to the centre of $\cb$, and then add the shortest geodesic from the origin to the other point.)
Appealing to \eqref{elementary} of Lemma \ref{lemma:inj-collar}, we find that $\rho(y)\geq \frac1e\rho(x)$ for all $y\in B_{g(t)}(x,1)$. Therefore, the curve $\si$ must have length
bounded by
$$(n-1)\frac{2\pi\rho(x)}{e} \leq L(\si)<2,$$
and we deduce that 
$$n-1<\frac{e}{\pi\rho(x)}\leq \frac{e}{\inj_{g(t)}(x)},$$
by \eqref{going_in_appendix} of Lemma \ref{lemma:inj-collar}.
Because we have assumed that $\inj_{g(t)}(x)\leq \nu\leq \arsinh(1)$, we see that
$$n\leq \frac{C}{\inj_{g(t)}(x)},$$
completing the claim.

Thus the elliptic theory applied in $\cb$ now gives us
only 
\beq
\label{elliptic_part}
|\pt{g}|_{C^k(g(t))}(x)\leq \frac{C}{\inj_{g(t)}(x)}\|
\pt g
\|_{L^1(B_{g(t)}(x,1))}.
\eeq
But we know that $B_{g(t)}(x,1)$ lies within the $(\pi e)\inj_{g(t)}(x)$-thin part of
the collar, see \eqref{est:compare-inj-rad}, which has area controlled by $C\inj_{g(t)}(x)$ (see \cite[(A.2)]{RTZ}) for universal
$C$.
Therefore, by Cauchy-Schwarz, we have
$$\|\pt g\|_{L^1(B_{g(t)}(x,1))}\leq C\inj_{g(t)}(x)^\half
\|\pt g\|_{L^2(B_{g(t)}(x,1))},$$
and we find that
$$
|\pt{g}|_{C^k(g(t))}(x)\leq C[\inj_{g(t)}(x)]^{-\half}\|\pt g\|_{L^2(B_{g(t)}(x,1))},
$$
which completes the proof.
\end{proof}
\brmk
The theory in \cite{RT3} implies that the exponent $-\half$ for $\inj_{g(t)}(x)$ in 
Lemma 
\ref{yaba_lemma} is optimal. 
\ermk

\begin{proof}[Proof of Lemma \ref{rootinjevol2}.]
Let $t_0$ be a time at which \eqref{inj_fn} is differentiable.
By Lemma \ref{weaker_inj_lemma}, we can write
\beq
\label{sunnyday2}
\left|\frac{d}{dt}\inj_{g(t)}(x)\bigg|_{t=t_0}\right|\leq
\frac14\int_0^{2\inj_{g(t_0)}(x)}|\pt g (\si',\si')|
\leq \half\inj_{g(t_0)}(x)\sup_\si|\pt g|_{g(t_0)}.
\eeq

Remark \ref{inj_upper_bd} tells us that $\inj_{g(t_0)}(x)$ is bounded above in terms of the genus. Therefore, so also is the length of $\si$, as we will need twice below.

Suppose first that $x\in M$ does not lie in any collar 
with $\ell\leq 2\arsinh(1)$
at time $t_0$.
The boundedness of the length of $\si$ ensures that it passes only a bounded distance into any collar. Thus, by \eqref{est:inj-by-d} of Lemma \ref{lemma:inj-collar}, and the fact that the injectivity radius off the collars is bounded below by $\arsinh(1)$ (by Lemma \ref{basic_inj_control}) we find that 
the injectivity radius is bounded below all along $\si$ by some $\de>0$ depending at most on the genus.
By Lemma \ref{yaba_lemma}, we then know that
$$\sup_{\si}|\pt{g}|_{g(t_0)}\leq C\|\pt g\|_{L^2(M,g(t_0))},$$
with $C$ depending only on the genus.
Inserting this estimate into \eqref{sunnyday2}, gives
$$\left|\frac{d}{dt}\inj_{g(t)}(x)\bigg|_{t=t_0}\right|
\leq C\inj_{g(t_0)}(x)\|\pt g\|_{L^2(M,g(t_0))},$$
and keeping in mind our upper bound for $\inj_{g(t_0)}(x)$ we deduce \eqref{est:inj-weaker-copy2} as desired, in this case.

On the other hand, suppose that $x\in M$ lies in some collar $\Col(\ell)$, with 
$\ell\leq 2\arsinh(1)$. 
Now the boundedness of the length of $\si$ can be combined with 
\eqref{est:compare-inj-rad} from
Lemma \ref{lemma:inj-collar} to tell us that the injectivity radius along $\si$ is bounded below by $\ep\inj_{g(t_0)}(x)$ for some  $\ep>0$  depending only on the genus.
Therefore, by Lemma \ref{yaba_lemma}, for all $y\in \si$ we have
$$|\pt{g}|_{g(t_0)}(y)\leq C[\inj_{g(t_0)}(x)]^{-\half}\|\pt g\|_{L^2(M,g(t_0))},$$
and inserting this into \eqref{sunnyday2} we again obtain \eqref{est:inj-weaker-copy2}.
\end{proof}

\section{Convergence of horizontal curves to noncompact hyperbolic metrics}
\label{horiz_curv_sect}

\newcommand{\zzinject}{\mu}

The main objective of this section is to give the proof of our main Theorem \ref{convergence_of_g2}. However, some of the supporting lemmata will be independently useful; for example we use Lemma \ref{lemma:Ck-horiz} as an important ingredient in \cite{global}. 

One of the assertions of Theorem \ref{convergence_of_g2} is the existence of a set $\cu$, and the theorem would imply that $\cu$ would satisfy
\beq
\label{Udef}
\cu=\{p\in M:\, \liminf_{t\upto T}\inj_{g(t)}(p)>0\}.
\eeq
In this section, we will take \eqref{Udef} as the \emph{definition} of $\cu$, and verify that it has the desired properties.

As we move along a horizontal curve, the injectivity radius changes, and therefore the $\de\thick$ and $\de\thin$ parts will evolve. Lemma \ref{rootinjevol2} allows us to keep track of how they are nested, as we explain in the following lemma, which will be required in the proof of Theorem \ref{convergence_of_g2}.

\begin{lemma}
\label{nested_cor}
Let $M$ be a closed oriented surface of genus $\ga\geq 2$,
and suppose $g(t)$ is a smooth horizontal curve in $\m_{-1}$, for $t\in [0,T)$, with finite length $\cl(0)<\infty$ as defined in \eqref{length_def}.
Define $\cu\subset M$ by \eqref{Udef}, and define for each $\zzinject\geq 0$ 
and $t\in [0,T)$ the 
subset
$$M^\zzinject(t):=\{p\in M\ :\ \inj_{g(t)}(p)>(K_0 \cl(t)+\zzinject)^2\},$$
where $K_0$ is from Lemma \ref{rootinjevol2}.
Then for $0\leq t_1\leq t_2<T$, and $\tilde\zzinject\in [0,\zzinject]$, we have
\beq
\label{nesting_statement}
M^\zzinject(t_1)\subset M^{\tilde\zzinject}(t_2).
\eeq
Moreover, 
we have 
\beq
\label{MFchar}
\cu=\bigcup_{t\in [0,T)}M^0(t).
\eeq
and even 
\beq
\label{MFchar2}
\cu=\bigcup_{\zzinject >0 ,t\in [0,T)}M^\zzinject(t).
\eeq
%
%
Meanwhile, in the case that $\zzinject$ is positive, we have that 
the $(K_0\cl(0)+\zzinject)^2$-thick part of $(M,g(0))$ is contained within the $\zzinject^2$-thick part of $(M,g(t))$ for $t\in [0,T)$. 
\end{lemma}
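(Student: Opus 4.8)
The plan is to establish \eqref{nesting_statement} first, since the two characterisations \eqref{MFchar} and \eqref{MFchar2} of $\cu$ and the final claim about the thick parts at different times will all follow by choosing parameters appropriately. The key quantitative input is Lemma \ref{rootinjevol2}: for each fixed $x$, the function $t\mapsto [\inj_{g(t)}(x)]^{1/2}$ is locally Lipschitz (Lemma \ref{inj_Lip_lem}) and its derivative is bounded in absolute value by $K_0\|\pt g\|_{L^2(M,g(t))}$ almost everywhere. Integrating this from $t_1$ to $t_2$ gives
$$
\left|[\inj_{g(t_2)}(x)]^{1/2}-[\inj_{g(t_1)}(x)]^{1/2}\right|\leq K_0\int_{t_1}^{t_2}\|\pt g\|_{L^2(M,g(t))}\,dt = K_0(\cl(t_1)-\cl(t_2)),
$$
using the definition \eqref{length_def} of $\cl$ and $t_1\leq t_2$. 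In particular $[\inj_{g(t_2)}(x)]^{1/2}\geq [\inj_{g(t_1)}(x)]^{1/2}-K_0\cl(t_1)$, with the understanding that $\cl(t_2)\geq 0$ was dropped.

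For \eqref{nesting_statement}: if $x\in M^\zzinject(t_1)$ then $\inj_{g(t_1)}(x)>(K_0\cl(t_1)+\zzinject)^2$, so $[\inj_{g(t_1)}(x)]^{1/2}>K_0\cl(t_1)+\zzinject$, hence
$$
[\inj_{g(t_2)}(x)]^{1/2}> K_0\cl(t_1)+\zzinject - K_0\cl(t_1) = \zzinject \geq \tilde\zzinject \geq K_0\cl(t_2)\cdot 0 + \tilde\zzinject,
$$
wait --- one must be slightly careful: to land in $M^{\tilde\zzinject}(t_2)$ one needs $[\inj_{g(t_2)}(x)]^{1/2}>K_0\cl(t_2)+\tilde\zzinject$, not merely $>\tilde\zzinject$. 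So instead I use the sharper integrated inequality keeping the $\cl(t_2)$ term: $[\inj_{g(t_2)}(x)]^{1/2}\geq [\inj_{g(t_1)}(x)]^{1/2}-K_0(\cl(t_1)-\cl(t_2)) > (K_0\cl(t_1)+\zzinject)-K_0\cl(t_1)+K_0\cl(t_2)=K_0\cl(t_2)+\zzinject\geq K_0\cl(t_2)+\tilde\zzinject$, which is exactly $x\in M^{\tilde\zzinject}(t_2)$. This is the one place requiring attention, and is the closest thing to an obstacle — everything else is bookkeeping.

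For \eqref{MFchar} and \eqref{MFchar2}: the inclusion $\bigcup_{\zzinject>0,t}M^\zzinject(t)\subset\bigcup_t M^0(t)\subset\cu$ is immediate, the first because $M^\zzinject(t)\subset M^0(t)$, the second because $p\in M^0(t)$ forces $\inj_{g(t)}(p)>(K_0\cl(t))^2\geq 0$ and then \eqref{nesting_statement} with $\tilde\zzinject=\zzinject=0$ shows $\inj_{g(s)}(p)>(K_0\cl(s))^2$ for all $s\geq t$; since $\cl(s)\downto 0$, $\liminf_{s\upto T}\inj_{g(s)}(p)\geq 0$ — and in fact one gets a definite positive lower bound once $K_0\cl(s)<[\inj_{g(t)}(p)]^{1/2}-K_0\cl(t)$'s worth of room is used, so $\liminf>0$ and $p\in\cu$. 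Conversely, if $p\in\cu$ then $\liminf_{t\upto T}\inj_{g(t)}(p)=:2\eta>0$, so there is $t$ with $\inj_{g(t)}(p)>\eta$ and (since $\cl(t)\downto 0$) with $K_0\cl(t)<\tfrac12\eta^{1/2}$; then $[\inj_{g(t)}(p)]^{1/2}>\eta^{1/2}>K_0\cl(t)+\tfrac12\eta^{1/2}$, giving $p\in M^{\zzinject}(t)$ for $\zzinject=\tfrac12\eta^{1/2}>0$. This proves both \eqref{MFchar} and \eqref{MFchar2}. Finally, the last assertion is the special case $t_1=0$, $t_2=t$, $\tilde\zzinject=\zzinject$ of \eqref{nesting_statement}: $M^\zzinject(0)\subset M^\zzinject(t)$ says precisely that $\{x:\inj_{g(0)}(x)>(K_0\cl(0)+\zzinject)^2\}\subset\{x:\inj_{g(t)}(x)>(K_0\cl(t)+\zzinject)^2\}\subset\{x:\inj_{g(t)}(x)>\zzinject^2\}$, i.e. the $(K_0\cl(0)+\zzinject)^2$-thick part of $(M,g(0))$ lies in the $\zzinject^2$-thick part of $(M,g(t))$.
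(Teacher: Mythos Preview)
Your proof is correct and follows essentially the same approach as the paper: integrate the pointwise bound from Lemma \ref{rootinjevol2} to obtain \eqref{nesting_statement}, then deduce the remaining assertions by suitable choices of parameters. One small wrinkle in your final paragraph: the $\de$-thick part in this paper is the closed set $\{\inj\geq\de\}$, so it is not literally $M^\zzinject(0)$ (which is defined with a strict inequality). The paper accounts for this by applying \eqref{nesting_statement} with $\tilde\zzinject\in[0,\zzinject)$ and then letting $\tilde\zzinject\upto\zzinject$; equivalently, you can simply rerun your integrated inequality with $\geq$ in place of $>$ throughout.
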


\begin{proof}[Proof of Lemma \ref{nested_cor}]
To see the nesting of the sets $M^\zzinject(t)$ claimed in \eqref{nesting_statement},
we note first that reducing $\zzinject$ can only increase the size of $M^\zzinject(t)$, so we may as well assume that $\tilde\zzinject=\zzinject$.

By definition, if $p\in M^\zzinject(t_1)$, then 
$\inj_{g(t_1)}(p)^\half>K_0 \cl(t_1)+\zzinject$.
By Lemma \ref{rootinjevol2}, 
we have
$$
\left[\inj_{g(t_1)}(p)\right]^{\half}-\left[\inj_{g(t_2)}(p)\right]^{\half}
\leq K_0
\int_{t_1}^{t_2}\norm{\partial_t g}_{L^2(M,g(t))}dt=K_0(\cl(t_1)-\cl(t_2)),$$
and hence
$$\inj_{g(t_2)}(p)^\half>K_0 \cl(t_2)+\zzinject$$
as required to establish that $p\in M^\zzinject(t_2)$.

To see \eqref{MFchar} and \eqref{MFchar2}, suppose first that $p\in M^\zzinject(\tilde t)$ for some 
$\tilde t\in [0,T)$ and some $\zzinject> 0$. 
By the first part of the lemma, for all $t\in [\tilde t,T)$ we have
$p\in M^\zzinject(t)$ and hence
$\inj_{g(t)}(p)>(K_0\cl(t)+\zzinject)^2\geq \zzinject^2$, and therefore
$p\in \cu$ as required.

If, more generally, we have $p\in M^0(\tilde t)$ for some 
$\tilde t\in [0,T)$, then we can choose $\zzinject>0$ small so that 
$p\in M^\zzinject(\tilde t)$ and the argument above applies to show that 
$p\in \cu$.

Conversely, if $p\in \cu$, then by definition of $\cu$, there must exist some small $\zzinject>0$ so that
$\inj_{g(t)}(p)>(2\zzinject)^2$ for all $t\in [0,T)$.
But then if we take $t\in [0,T)$ sufficiently close to $T$ so that
$K_0\cl(t)\leq \zzinject$, which is possible since $\cl(0)<\infty$, then we must have $p\in M^\zzinject(t)\subset M^0(t)$ as required
in \eqref{MFchar2} and \eqref{MFchar}.

For the final part of the lemma,
note that if $p$ lies in the $(K_0\cl(0)+\zzinject)^2$-thick part of $(M,g(0))$, then $p\in M^{\tilde \zzinject}(0)$ for any $\tilde \zzinject\in [0,\zzinject)$. Therefore, by \eqref{nesting_statement}, we also have 
$p\in M^{\tilde \zzinject}(t)$, and hence $\inj_{g(t)}(p)>\tilde\zzinject^2$, 
for all $t\in [0,T)$. 
By taking the limit $\tilde\zzinject\upto \zzinject$, we see that
$p$ lies in the $\zzinject^2$-thick part of $(M,g(t))$ as claimed.
\end{proof}

Consider a horizontal curve $g(t)$ for $t\in [0,T)$ that degenerates as $t\upto T$. Then however large we take 
$t_0\in [0,T)$, the metrics $g(t)$ for $t\in [t_0,T)$ will clearly not be globally comparable.
We now want to state and prove a lemma telling us that on a thick-enough part, the metrics \emph{are} comparable. In fact, we will argue that the metrics are not just comparable as bilinear forms, but are even close in $C^k$. Lemma \ref{yaba_lemma} from the previous section tells us that $\pt g$ is controlled in $C^k$ in terms of the Weil-Petersson speed, where the injectivity radius is not too small, which makes $C^k$ closeness seem reasonable. However, this is $C^k$ control with respect to the evolving metric $g(t)$, whereas we need $C^k$ control with respect to a fixed metric.

\begin{lemma} \label{lemma:Ck-horiz}
Let $M$ be a closed oriented surface of genus $\ga\geq 2$.
Suppose $g(t)$ is a smooth horizontal curve in $\m_{-1}$, 
for $t\in [0,T)$, 
with finite length. 
Let $\delta>0$ and suppose $t_0\in [0,T)$ is sufficiently close to $T$ so that $(2K_0\cl(t_0))^2\leq\delta$, where $K_0$ is from 
Lemma \ref{rootinjevol2}. Then for any 
\beq
\label{x_hyp}
x\in \bigcup _{\tilde t\in[t_0,T)}\delta\thick(M,g(\tilde t))
\eeq
and any $s,t\in[t_0,T)$ we have
\beq
\label{est:equiv-metric}
g(s)(x)\leq C_1\cdot g(t)(x),
\eeq
and
\beq \label{est:equiv-inj}
\inj_{g(s)}(x)\leq C_2\cdot \inj_{g(t)}(x)
\eeq
where $C_1\in[1,\infty)$ depends only on the genus of $M$, 
while $C_2\in[1,\infty) $ is a universal constant. 
Furthermore, for any $k\in \N$ and any $x$ as above, we have
\beq \label{est:bamberg1} \abs{\pt g(t)}_{C^k(g(s))}(x)\leq C\delta^{-1/2}\norm{\pt g(t)}_{L^2(M,g(t))} \text{ for every } s,t\in [t_0,T),
\eeq
where $C$ depends only on $k$ and the genus of $M$.
In particular
\beq \label{est:bamberg2} \abs{g(t_1)-g(t_2)}_{C^k(g(s))}(x)\leq C\delta^{-1/2}(\cl(t_1)-\cl(t_2)), \eeq for any 
$t_0\leq t_1\leq t_2<T$, $s\in[t_0,T)$.
\end{lemma}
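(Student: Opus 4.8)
The main point is that Lemma~\ref{yaba_lemma} gives $C^k$ control on $\pt g$ with respect to the \emph{evolving} metric $g(t)$, while here we need control with respect to a \emph{fixed} metric $g(s)$; the passage between the two is achieved by first establishing the uniform equivalence \eqref{est:equiv-metric} and then bootstrapping. First I would fix $x$ as in \eqref{x_hyp}, so that $x\in\de\thick(M,g(\tilde t))$ for some $\tilde t\in[t_0,T)$. The final part of Lemma~\ref{nested_cor}, applied with starting time $\tilde t$ in place of $0$ and with $\zzinject^2=\tfrac14\de$ (legitimate since $(2K_0\cl(t_0))^2\le\de$ forces $K_0\cl(\tilde t)\le K_0\cl(t_0)\le\tfrac12\sqrt\de=\zzinject$, so the $(K_0\cl(\tilde t)+\zzinject)^2$-thick part of $g(\tilde t)$ contains $x$), shows that $x$ lies in the $\tfrac14\de$-thick part of $(M,g(t))$ for \emph{every} $t\in[\tilde t,T)$; a symmetric argument using the nesting \eqref{nesting_statement} in both time directions (or simply re-running Lemma~\ref{nested_cor} with base time $\min(s,t)$) shows $\inj_{g(t)}(x)\ge c\de$ for all $t\in[t_0,T)$, with $c$ universal. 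In particular $x\in\cu$.

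\textbf{Metric equivalence.} With a uniform lower bound $\inj_{g(t)}(x)\ge c\de$ on the whole interval, Lemma~\ref{yaba_lemma} with $k=0$ gives $|\pt g(t)|_{g(t)}(x)\le C(c\de)^{-1/2}\|\pt g(t)\|_{L^2(M,g(t))}$, and since by Remark~\ref{inj_upper_bd} and Cauchy--Schwarz the $L^2$ norm of $\pt g$ is bounded by $C\,\|\pt g\|_{L^2}$ in a way that integrates to the finite length, we get $\int_{t_0}^T|\pt g(t)|_{g(t)}(x)\,dt\le C\de^{-1/2}\cl(t_0)\le C\de^{-1/2}\cdot\tfrac12\de^{1/2}K_0^{-1}\le C$, a genus-dependent constant. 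Writing $\ddt g(t)(x)(v,v)=\pt g(t)(x)(v,v)$ and using $|\pt g(t)(x)(v,v)|\le |\pt g(t)|_{g(t)}(x)\,g(t)(x)(v,v)$, Grönwall's inequality applied to $t\mapsto g(t)(x)(v,v)$ yields $g(s)(x)(v,v)\le \exp\!\big(\int_{t_0}^T|\pt g|_{g}(x)\big)\,g(t)(x)(v,v)\le C_1\,g(t)(x)(v,v)$ for all $s,t\in[t_0,T)$ and all $v\in T_xM$, which is \eqref{est:equiv-metric}; \eqref{est:equiv-inj} follows since injectivity radius is monotone under a pointwise-in-a-neighbourhood comparison of metrics — more carefully, one should run the Grönwall argument not just at $x$ but uniformly on $B_{g(s)}(x,1)$ (which, by the same thick-part reasoning together with Lemma~\ref{lemma:inj-collar}, also has injectivity radius bounded below by a universal multiple of $\de$), so that the comparison holds on a whole ball and genuinely controls lengths of geodesic loops. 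This is the step I expect to be the main obstacle: one must be careful that the ball on which the comparison is needed itself stays in the region of controlled injectivity radius at all times, and that the constant $c$ degrades only by a universal factor.

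\textbf{Higher derivatives.} Once \eqref{est:equiv-metric} is in hand on a fixed ball around $x$, the estimate \eqref{est:bamberg1} follows by comparing the two Levi-Civita connections: $\grad_{g(s)}$ and $\grad_{g(t)}$ differ by a tensor built from $g(s)^{-1}\grad_{g(s)}(g(t))$ (equivalently, from $\grad_{g(t)}(g(s))$), and iterated covariant derivatives $\grad_{g(s)}^l(\pt g(t))$ can be expanded, by an induction on $l$, as a universal polynomial in $\grad_{g(t)}^{j}(\pt g(t))$ (for $j\le l$), the tensors $\grad_{g(t)}^{j}(g(s))$, and $g(s)^{-1}$, with all contractions taken with $g(s)$ or $g(t)$ — all of which are controlled: the first by Lemma~\ref{yaba_lemma} at order $l$, the second because $\grad_{g(t)}^{j}(g(s)) = \grad_{g(t)}^{j}\!\big(g(t)+\int_s^t(-\pt g(\tau))\,d\tau\big)$ and $\grad_{g(t)}^{j}g(t)=0$ so this equals $-\int_s^t\grad_{g(t)}^{j}(\pt g(\tau))\,d\tau$, again bounded via Lemma~\ref{yaba_lemma} (using the uniform injectivity lower bound) by $C\de^{-1/2}\int_s^t\|\pt g(\tau)\|_{L^2}\,d\tau\le C\de^{-1/2}\cl(t_0)\le C$, and the third by \eqref{est:equiv-metric}. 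Absorbing all these bounded quantities into the constant gives $|\pt g(t)|_{C^k(g(s))}(x)\le C\de^{-1/2}\|\pt g(t)\|_{L^2(M,g(t))}$, which is \eqref{est:bamberg1}. Finally \eqref{est:bamberg2} is obtained by writing $g(t_1)-g(t_2)=-\int_{t_1}^{t_2}\pt g(\tau)\,d\tau$, moving $|\cdot|_{C^k(g(s))}(x)$ inside via Minkowski's integral inequality, applying \eqref{est:bamberg1} at each $\tau$, and using $\int_{t_1}^{t_2}\|\pt g(\tau)\|_{L^2(M,g(\tau))}\,d\tau=\cl(t_1)-\cl(t_2)$ by the definition \eqref{length_def}.
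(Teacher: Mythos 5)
Your overall strategy is close to the paper's for the metric equivalence \eqref{est:equiv-metric} and for the final integration step \eqref{est:bamberg2}, but two of the key intermediate steps as you have written them either lose the claimed quantitative conclusion or are circular.

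For the injectivity radius bound \eqref{est:equiv-inj}, you propose to derive it from \eqref{est:equiv-metric} by comparing the two metrics on a unit ball. Setting aside the care needed with balls, this route can at best give a constant $C_2$ inheriting the genus-dependence of $C_1$, whereas the lemma asserts $C_2$ is \emph{universal}. The paper obtains \eqref{est:equiv-inj} directly: combining the lower bound $\inj_{g(t)}(x)\geq [K_0\cl(t_0)]^2$ from Lemma \ref{nested_cor} with the pointwise derivative estimate of Lemma \ref{rootinjevol2} gives $\bigl|\ddt\log\inj_{g(t)}(x)\bigr|\leq \frac{2}{\cl(t_0)}\norm{\pt g}_{L^2}$, which integrates over $[t_0,T)$ to a bound by $2$, yielding a universal $C_2=e^2$. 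This is both simpler and sharper than the ball-comparison route.

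For the higher-derivative estimate \eqref{est:bamberg1}, your proposal to control $\grad_{g(t)}^{j}g(s)$ by writing it as $-\int_s^t\grad_{g(t)}^{j}(\pt g(\tau))\,d\tau$ and then "bounding via Lemma \ref{yaba_lemma}" is circular. Lemma \ref{yaba_lemma} bounds $\grad_{g(\tau)}^{j}(\pt g(\tau))$, with the Levi-Civita connection taken at the \emph{same} time $\tau$; bounding the mixed-time quantity $\grad_{g(t)}^{j}(\pt g(\tau))$ is precisely an instance of \eqref{est:bamberg1}, the thing being proved. A naive induction on $k$ does not escape this: expanding $\grad_{g(s)}^{k}\Om$ in terms of $\grad_{g(t)}$ and the Christoffel-difference tensor $A$ requires $\grad_{g(t)}^{k-1}A\sim\grad_{g(t)}^{k}g(s)$, which in turn needs $\grad_{g(t)}^{k}(\pt g(\tau))$ — the same order $k$ you are trying to establish. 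The paper avoids this by first proving Lemma \ref{tensor_evolve_lem}, a Gr\"onwall-type estimate for $t\mapsto |\Om|_{C^k(g(t))}(x)$ for a \emph{fixed} tensor $\Om$: the time derivative of $\grad_{g(t)}^{l}\Om$ is expressed via the identity \eqref{general_case}, so only $\grad_{g(t)}^{i}(\pt g(t))$ at the same time $t$ appears, which Lemma \ref{yaba_lemma} controls; integrating the resulting differential inequality gives $|\Om|_{C^k(g(s))}(x)\leq C|\Om|_{C^k(g(t))}(x)$, and then specialising to $\Om=\pt g(t)$ delivers \eqref{est:bamberg1} with no circularity. You should adopt this differential (rather than integrated) formulation of the connection comparison.
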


Before proving Lemma \ref{lemma:Ck-horiz}, we need to consider the evolution of norms of tensors and their covariant derivatives as the underlying metric evolves. We use the notation from \eqref{Ck_notation}.

\begin{lemma}
\label{tensor_evolve_lem}
Suppose, on a manifold $M$, that $g(t)$ is a smooth one-parameter family of metrics for $t\in [t_1,t_2]$, $\Om$ is a fixed smooth tensor, $x\in M$ and $k\in\N\union\{0\}$. Then there exists $C<\infty$ depending on the order of $\Om$, the dimension of $M$, and $k$ such that
\beq
\label{qwerty}
\left|
\pl{}{t}\nabla^k\Om\right|_{g(t)}(x)
\leq
C|\Om|_{C^k(g(t))}(x)
|\pt g|_{C^k(g(t))}(x).
\eeq
Moreover, for any $s_1,s_2\in[t_1,t_2]$, we have
\beq
\label{asdfg}
|\Om|_{C^k(g(s_1))}(x)\leq
|\Om|_{C^k(g(s_2))}(x)
\exp\left[
C\int_{t_1}^{t_2}|\pt g|_{C^k(g(t))}(x)dt
\right].
\eeq
\end{lemma}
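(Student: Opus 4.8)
The plan is to prove \eqref{qwerty} first by a direct computation at the point $x$, and then obtain \eqref{asdfg} by a Gr\"onwall-type argument applied to the function $t\mapsto|\Om|_{C^k(g(t))}(x)$.

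\textbf{Step 1: Pointwise evolution of the covariant derivatives.} Fix $x\in M$ and work in $g(t)$-normal coordinates at $x$ for the single time under consideration, or more invariantly differentiate the tensorial identities. The key observation is the standard formula for the $t$-derivative of the Christoffel symbols: $\pt\Gamma^a_{bc}$ is a sum of terms each of which is a contraction of $g^{-1}$ with one covariant derivative $\nabla(\pt g)$. Writing $A=\pt\Gamma$, one has the schematic identity $\pt(\nabla\Om)=\nabla(\pt\Om)+A*\Om=A*\Om$ since $\Om$ is $t$-independent, and inductively $\pt(\nabla^k\Om)$ is a finite sum of terms of the form $(\nabla^j A)*(\nabla^{k-1-j}\Om)$ for $0\le j\le k-1$. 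Since $\nabla^j A$ is built from $\nabla^{j+1}(\pt g)$ together with contractions by $g^{-1}$ (and $|g^{-1}|_{g}$, $|g|_g$ are bounded by a dimensional constant because $g$ is a metric), each such term is bounded pointwise at $x$ by $C\,|\pt g|_{C^k(g(t))}(x)\,|\Om|_{C^k(g(t))}(x)$, with $C$ depending only on $\dim M$, the order of $\Om$, and $k$. Summing over the finitely many terms gives \eqref{qwerty}. One should also note that the quantity $|\nabla^k\Om|_{g(t)}(x)$ is differentiable in $t$: this follows because it is the square root of a smooth (in $t$) nonnegative function, and where that function vanishes one argues directly with difference quotients, or one simply works with the smooth quantity $|\nabla^k\Om|^2_{g(t)}(x)$ throughout and takes square roots at the end.

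\textbf{Step 2: From \eqref{qwerty} to the summed quantity.} Set $\varphi(t):=|\Om|_{C^k(g(t))}(x)=\sum_{l=0}^k|\nabla^l_{g(t)}\Om|_{g(t)}(x)$. For each $l$, $\pt|\nabla^l\Om|^2_{g(t)}(x)$ is controlled by combining the evolution of the metric on which the norm is taken (contributing a term bounded by $|\pt g|_{g(t)}(x)\,|\nabla^l\Om|^2_{g(t)}(x)$) with the term from \eqref{qwerty} (bounded by $|\nabla^l\Om|_{g(t)}(x)\cdot C\varphi(t)\,|\pt g|_{C^k(g(t))}(x)$). Dividing by $|\nabla^l\Om|_{g(t)}(x)$ where this is nonzero, and summing over $l$, one deduces the differential inequality
$$\Big|\tfrac{d}{dt}\varphi(t)\Big|\leq C\,|\pt g|_{C^k(g(t))}(x)\,\varphi(t)$$
for almost every $t$, where $C$ depends only on $\dim M$, the order of $\Om$, and $k$. (Where some $|\nabla^l\Om|_{g(t)}(x)$ vanishes one handles it by the usual trick of replacing $\varphi$ by $\varphi_\epsilon=\sum_l(|\nabla^l\Om|^2_{g(t)}(x)+\epsilon)^{1/2}$, deriving the inequality for $\varphi_\epsilon$ with a constant independent of $\epsilon$, applying Gr\"onwall, and letting $\epsilon\downarrow 0$.)

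\textbf{Step 3: Gr\"onwall.} Integrating the differential inequality from Step 2 between $s_1$ and $s_2$ (either order) and exponentiating yields
$$\varphi(s_1)\leq\varphi(s_2)\,\exp\!\Big[C\int_{t_1}^{t_2}|\pt g|_{C^k(g(t))}(x)\,dt\Big],$$
which is exactly \eqref{asdfg}, since $[s_1,s_2]\subset[t_1,t_2]$ lets us enlarge the integration interval (the integrand is nonnegative). The main obstacle is really just the bookkeeping in Step 1 — keeping track that $\pt\Gamma$ and its covariant derivatives only ever involve $\nabla^{j+1}(\pt g)$ with at most $k$ derivatives total when computing $\pt(\nabla^k\Om)$, so that the right-hand side genuinely involves only the $C^k$ norm of $\pt g$ and not a higher norm; everything else is routine, with the minor technical point in Steps 1--2 being the non-smoothness of the norms at their zero set, dispatched by the standard $\epsilon$-regularisation.
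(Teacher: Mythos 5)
Your proposal takes essentially the same route as the paper: the same inductive schematic identity $\pt(\nabla^l\Om)=\sum_{i\geq 1}\nabla^{l-i}\Om * \nabla^i(\pt g)$ (the paper cites it from \cite{RFnotes} rather than deriving it via Christoffel symbols, but it is the same formula), followed by a differential inequality for $|\Om|_{C^k(g(t))}(x)$ and Gr\"onwall. The only cosmetic difference is how the nondifferentiability of the norm at its zero set is handled — the paper simply notes that $t\mapsto|\nabla^l\Om|_{g(t)}(x)$ is Lipschitz and works at a.e.\ $t$, whereas you use $\epsilon$-regularisation — but both are standard and correct.
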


\begin{proof}[Proof of Lemma \ref{tensor_evolve_lem}]
Instead of a fixed tensor $\Om$, we start by considering a smooth one-parameter family of tensors $\om(t)$. A standard computation (see e.g. \cite[(2.3.3)]{RFnotes}, and \cite[\S 2.1]{RFnotes} for $*$-notation) tells us that
\beq
\label{case1}
\pl{}{t}\nabla\om=\nabla\pl{\om}{t}+\om * \nabla\pl{g}{t},
\eeq
and by induction, this can be extended to
\beq
\label{general_case}
\pl{}{t}\nabla^l\om=\nabla^l\pl{\om}{t}+\sum_{i=1}^l\nabla^{l-i}\om * \nabla^i\pl{g}{t},
\eeq
for $l\in\N$, where the inductive step follows by replacing $\om$ in \eqref{case1} by $\nabla^{l-1}\om$. For example,
\beqa
\pl{}{t}\nabla^2\om&=\pl{}{t}\nabla(\nabla\om)=\nabla(\pl{\nabla\om}{t})+\nabla\om * \nabla\pl{g}{t}\\
&=\nabla\left(\nabla\pl{\om}{t}+\om * \nabla\pl{g}{t}\right)+\nabla\om * \nabla\pl{g}{t}\\
&=\nabla^2\pl{\om}{t}+\nabla\om * \nabla\pl{g}{t}
+\om * \nabla^2\pl{g}{t}.
\eeqa
Taking norms of \eqref{general_case} in the case that $\om(t)=\Om$ is independent of $t$ gives \eqref{qwerty}.

Meanwhile, considering again a smooth  one-parameter family of tensors $\om(t)$, we can consider the evolution of its norm $|\om(t)|_{g(t)}$, which is a Lipschitz function of $t$. 
Keeping in mind that 
$\pl{}{t}|\om|_{g(t)}^2=\pl{}{t}(\om * \om)=\pl{\om}{t} * \om + \pl{g}{t}*\om*\om$, 
we see that where $t\mapsto |\om(t)|_{g(t)}$ is
differentiable, we have
$$\pl{}{t}|\om|_{g(t)}\leq C|\om|_{g(t)}|\pt g|_{g(t)}
+C\left|\pl\om{t}\right|_{g(t)}.$$
We can apply this with $\om=\nabla^l\Om$, using \eqref{qwerty}, to give
\beqa
\pl{}{t}|\nabla^l\Om|_{g(t)}&\leq C|\nabla^l\Om|_{g(t)}|\pt g|_{g(t)}
+C\left|\pl{}{t}\nabla^l\Om\right|_{g(t)}\\
&\leq
C|\Om|_{C^k(g(t))}|\pt g|_{C^k(g(t))}
\eeqa
for $l\leq k$, at $x$, and hence the Lipschitz function
$t\mapsto |\Om|_{C^k(g(t))}(x)$
satisfies
$$\pl{}{t}|\Om|_{C^k(g(t))}\leq
C|\Om|_{C^k(g(t))}|\pt g|_{C^k(g(t))},$$
for almost all $t$, which can be integrated to give \eqref{asdfg}.
\end{proof}

\begin{proof}[Proof of Lemma \ref{lemma:Ck-horiz}]
We begin by observing that we may assume that $\cl(t)>0$ for all $t\in [0,T)$. Indeed, the case that $\cl(0)=0$ is trivial, and if $\cl(0)>0$ but $\cl(t)=0$ for some $t\in (0,T)$, then it would be enough to prove the lemma with $T$ replaced by the smallest value of $t$ for which $\cl(t)=0$.

Our first task is to establish \eqref{est:equiv-metric} and \eqref{est:equiv-inj}. 
For this part of the lemma it suffices to prove the claims with the hypothesis \eqref{x_hyp} replaced by the stronger condition that $x$ satisfies $\inj_{ g(t_0)}(x)\geq \delta$.
In the remaining situations that $x$ only satisfies $\inj_{ g(\tilde t)}(x)\geq \delta$ for some $\tilde t\in (t_0,T)$, we can reach the desired conclusion by applying the restricted claim first to $g(t)$ for $t$ restricted to
$[\tilde t,T)$, and then to the horizontal curve $g(\tilde t-t)$ for $t\in[0,\tilde t-t_0)$.
The combination of these two applications gives the general result, after adjusting the constants $C_1$ and $C_2$. 

Applying the last part of Lemma \ref{nested_cor} with $\mu=K_0\cl(t_0)$, which is possible because $(2K_0\cl(t_0))^2\leq \de$,
we find that for every $t\in [t_0,T)$ we have
\beq 
\label{est:phoenix}
\inj_{g(t)}(x)\geq\left[K_0\cl(t_0)\right]^2.
\eeq
Combining \eqref{est:phoenix} with Lemma \ref{rootinjevol2} gives
\beq 
\left|\ddt \big[\log(\inj_{g(t)}(x))\big]\right|=2\inj_{g(t)}(x)^{-1/2}\left|\ddt \big[\inj_{g(t)}(x)\big]^{1/2}\right|\leq 
\frac{2}{\cl(t_0)}\norm{\pt g}_{L^2(M,g(t))}
\eeq
which, once integrated over time, yields 
\eqref{est:equiv-inj}.
We then combine  Lemma \ref{yaba_lemma} with 
\eqref{est:phoenix} and obtain that for $t\in [t_0,T)$ we have
$$\abs{\pt g(t)}_{g(t)}(x)\leq \frac{C}{\cl(t_0)}\cdot \norm{\pt g(t)}_{L^2(M,g(t))},$$
where $C$ depends only on the genus of $M$.
Given any $t_1,t_2\in [t_0,T)$ we can thus 
estimate
\beq 
g(t_1)(x)\leq e^{\int_{t_0}^T\abs{\pt g(t)}_{g(t)}(x) dt}\cdot g(t_2)(x)\leq e^{C} g(t_2)(x)
\eeq 
which gives \eqref{est:equiv-metric}.

Having thus established \eqref{est:equiv-metric} and  \eqref{est:equiv-inj} for all points $x$ that satisfy the hypothesis \eqref{x_hyp}
we now turn to the proofs of \eqref{est:bamberg1} and \eqref{est:bamberg2}.

Because $\inj_{g(t)}(x)\geq C_2^{-1} \inj_{g(\tilde t)}(x)
\geq C_2^{-1} \de$, by \eqref{est:equiv-inj},
we can reduce \eqref{yaba2} of Lemma \ref{yaba_lemma} to 
\beq
\label{elliptic_part2}
|\pt{g}|_{C^k(g(t))}(x)\leq C
\de^{-\half}
\|\pt g\|_{L^2(M,g(t))}, \qquad \text{for }t\in[t_0,T),
\eeq
which, once combined with \eqref{asdfg} of Lemma \ref{tensor_evolve_lem}, tells us that for $s_1,s_2\in [t_0,T)$ we have
\beqa
|\Om|_{C^k(g(s_1))}(x)&\leq
|\Om|_{C^k(g(s_2))}(x)
\exp\left[
C
\de^{-\half}
\cl(t_0)
\right]\\
&\leq C |\Om|_{C^k(g(s_2))}(x),
\eeqa
because $(2K_0\cl(t_0))^2\leq \de$, where the constant $C$ depends only on $k$, the genus of $M$ and the order of the arbitrary tensor $\Om$.
Applying this in the case that $\Om=\pt{g}(t)$, for some fixed $t\in [t_0,T)$, and returning again to \eqref{elliptic_part2}, we  find that for every $s\in[t_0,T)$
$$|\pt{g}(t)|_{C^k(g(s))}(x)
\leq C
|\pt{g}(t)|_{C^k(g(t))}(x)
\leq C \delta^{-1/2}
\|\pt g(t)\|_{L^2(M,g(t))},$$
as claimed in \eqref{est:bamberg1}.
The final claim \eqref{est:bamberg2} then immediately follows by integrating \eqref{est:bamberg1} over time.
\end{proof}

When proving Theorem \ref{convergence_of_g2}, the following lemma will be useful in order to prove the completeness of the limit $h$.

\begin{lemma}
\label{thickthindist}
For all $\beta >0$ and $Q<\infty$, there exists $\ep>0$ depending on $\beta $ and $Q$ such that if $(M,g)$ is any closed oriented hyperbolic surface, we have 
$$\dist_g(\beta \thick(M,g),\ep\thin(M,g))>Q$$
whenever these sets are nonempty.
\end{lemma}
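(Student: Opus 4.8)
The plan is to exploit the rigid structure of hyperbolic collars, since any point in the $\ep\thin$ part either lies deep inside a collar or in a cusp-like region, while the $\beta\thick$ part is uniformly bounded away from such regions in a quantitative way. First I would recall from the Collar Lemma (Lemma~\ref{lemma:collar}) and its companions in the appendix that a closed hyperbolic surface decomposes into a controlled number (bounded by the genus, though we do not even need that here) of standard collars $\Col(\ell)$ around short geodesics, together with a complementary region on which $\inj \geq \arsinh(1)$. Thus if $\beta < \arsinh(1)$ — which we may assume, shrinking $\beta$ only making the statement stronger — the entire $\beta\thick$ part is contained in the complement of the collars together with bounded neighbourhoods of the collar ends, whereas for $\ep$ small the $\ep\thin$ part sits deep in the collars.

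The key quantitative input is the relationship between the injectivity radius and the distance to the core geodesic inside a collar, as encoded in Lemma~\ref{lemma:inj-collar}: roughly, in the collar around a geodesic of length $\ell$, a point at signed distance $d$ from the core has injectivity radius comparable to $\ell\cosh(d)$ (for $d$ not too large) or, more usefully for us, the injectivity radius decays/grows with a definite rate as one moves along the collar. So I would argue as follows. Fix a point $p$ in the $\beta\thick$ part and a point $q$ in the $\ep\thin$ part. Since $\inj_g(q) < \ep^2$ (with the squaring convention, or simply $\inj_g(q)<\ep$ depending on the exact statement — I will match the appendix's conventions), $q$ must lie well inside some collar $\Col(\ell)$, at a distance from the nearest end that tends to infinity as $\ep\to 0$, uniformly in the surface and in $\ell$; this is exactly the content of the estimates in Lemma~\ref{lemma:inj-collar} relating $\inj$ to distance-into-the-collar. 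Meanwhile $p$, having $\inj_g(p) > \beta$, cannot lie in the part of any collar where $\inj < \beta$; hence $p$ lies outside a fixed-width neighbourhood of the thin core of each collar. Any path from $p$ to $q$ must therefore cross from the $\beta$-level region of the collar to the $\ep$-level region, and the length of this crossing is at least the distance between the corresponding level sets of $\inj$ inside the collar, which by the collar geometry is bounded below by a quantity $\to\infty$ as $\ep\to 0$. Choosing $\ep$ small enough that this lower bound exceeds $Q$ finishes the proof.

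The main obstacle is bookkeeping rather than conceptual: I must be careful that $q$ could lie in a collar while $p$ lies in a \emph{different} collar or in the collar-free region, so the lower bound on $\dist_g(p,q)$ cannot simply be extracted from a single collar's intrinsic geometry. The clean way around this is to work only with the crossing \emph{within the collar containing $q$}: a geodesic from $p$ to $q$ enters that collar through one of its two ends (where $\inj \geq \arsinh(1) > \beta$, say, after shrinking $\beta$), and must travel from that end down to $q$, and the length of that sub-arc is at least the distance from the end of the collar to $q$, which is exactly what Lemma~\ref{lemma:inj-collar} controls in terms of $\inj_g(q) < \ep$. So the quantitative estimate is genuinely local to one collar, and the genus / number of collars never enters — consistent with the statement that $\ep$ depends only on $\beta$ and $Q$. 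The only remaining subtlety is the boundary case where the ``collar'' degenerates to a cusp (if one allows the surfaces to be noncompact, which here they are not, since the hypothesis says $(M,g)$ closed), so this case does not arise and can be ignored.
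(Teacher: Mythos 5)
Your approach is essentially the one the paper takes: both arguments rest on the collar decomposition and on the estimate \eqref{est:inj-by-d} of Lemma~\ref{lemma:inj-collar}, which converts the upper bound $\inj_g(q)\leq\ep$ into a lower bound $d(q)\geq -\log\sinh\ep$ (diverging as $\ep\to 0$) on the distance from $q$ to the boundary of the collar that contains $q$.

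There is, however, a gap in how you dispose of the bookkeeping concern. You treat only the possibility that $p$ lies in a \emph{different} collar from $q$, or outside all collars, and argue that a minimising geodesic from $p$ to $q$ must then enter $q$'s collar through an end. But $p$ can perfectly well lie \emph{inside} the same collar as $q$: a $\beta$-thick point can sit at depth up to $-\log\frac{\sinh\beta}{1+\sqrt2}$ inside a collar, and shrinking $\beta$ only increases this bound, so your ``after shrinking $\beta$'' remark does not rescue the claim. In that case the geodesic from $p$ to $q$ never passes through a collar end, and ``the distance from the end of the collar to $q$'' is not a lower bound for $\dist_g(p,q)$. The paper closes this case by using the \emph{upper} inequality in \eqref{est:inj-by-d} as well: every $\beta$-thick point $p$ that lies in a collar satisfies $d(p)\leq -\log\frac{\sinh\beta}{1+\sqrt 2}$, so in all cases
$$\dist_g\big(\beta\thick(M,g),\,\ep\thin(M,g)\big)\geq -\log\sinh\ep+\log\frac{\sinh\beta}{1+\sqrt2},$$
which exceeds $Q$ once $\ep$ is small enough. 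This two-sided use of \eqref{est:inj-by-d} is the one ingredient missing from your write-up.
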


\begin{proof}[Proof of Lemma \ref{thickthindist}]
For the given $\beta $, being in the $\beta $-thick part forces us not to be too far down any collars, as we now explain. 
For our given $(M,g)$, consider the finitely many disjoint collars (as in Lemma \ref{thickthindist}) with 
$\ell\leq 2\arsinh(1)$. Assuming we ask that $\ep$ is smaller than $\arsinh(1)$, we can be sure that $\ep\thin(M,g)$ lies entirely within these collars, and indeed, by \eqref{est:inj-by-d} of Lemma \ref{lemma:inj-collar}, we can be sure that it lies at least a distance $-\log\sinh\ep$ from the boundary of the collars.

On the other hand, if $x_1\in\beta \thick(M,g)$ then either $x_1$ lies outside all these collars, or it does not lie too far within them. More precisely, again by \eqref{est:inj-by-d} of Lemma \ref{lemma:inj-collar}, the furthest that $x_1$ can lie from the boundary of a collar (within that collar) is $-\log\frac{\sinh\beta }{1+\sqrt2}$.

Thus, imposing also that $\ep<\beta $, we see that
$$\dist_g(\beta \thick(M,g),\ep\thin(M,g))\geq -\log\sinh\ep-\left(-\log\frac{\sinh\beta }{1+\sqrt2}\right),$$
and we can make the right-hand side larger than our given $Q$ by choosing $\ep>0$ sufficiently close to zero.
\end{proof}


\begin{proof}[Proof of Theorem \ref{convergence_of_g2}]
As explained at the start of 
the proof of Lemma \ref{lemma:Ck-horiz}, we may assume that $\cl(t)>0$ for all $t\in [0,T)$. 
As in the proof of Lemma \ref{nested_cor}, by applying Lemma \ref{rootinjevol2}, we know that
$$\left|
\left[\inj_{g(t_1)}(x)\right]^{\half}
-\left[\inj_{g(t_2)}(x)\right]^{\half}
\right|
\leq K_0\left(
\cl(t_1)-\cl(t_2)
\right)
$$
for $0\leq t_1\leq t_2<T$, and this implies that $\inj_{g(t)}$ converges in $C^0$ to some nonnegative limit $\hat\ci\in C^0(M)$ as $t\upto T$.
Moreover, by taking the limit $t_2\upto T$, we see that 
\beq
\label{unif_conv2}
\sup_{x\in M} \left\|[\inj_{g(t)}]^\half-{\hat\ci}^\half\right\|\leq K_0\cl(t)
\eeq
for all $t\in [0,T)$.
As remarked at the beginning of this section, we \emph{define}
$\cu$ by \eqref{Udef}. The uniform convergence we have just established shows that in fact 
\beq
\label{Udef2}
\cu=\{p\in M:\, \hat\ci(p)=\lim_{t\upto T}\inj_{g(t)}(p)>0\} 
\eeq
and that $\cu$ is open.
By the description of collars given in the appendix,
the set $\cu$ must be nonempty, i.e. the injectivity radius cannot converge to zero everywhere.

Given any $t_0\in [0,T)$ we let $\tilde\delta=\tilde\delta(t_0):=(2K_0\cl(t_0))^2>0$ and 
choose $\mu=\mu(t_0):=\half\tilde\delta^{1/2}>0$. Then 
$M^\mu(t_0)\subset \tilde\delta(t_0)\thick (M,g(t_0))$ 
(where $M^\mu$ is defined in Lemma \ref{nested_cor})
so 
\eqref{est:bamberg2} of Lemma \ref{lemma:Ck-horiz} (with $\de$ there equal to $\tilde\de$ here) yields
\beq
\label{zap}
\|g(t_1)-g(t)\|_{C^k(M^\zzinject(t_0),g(s))}\leq C
\zzinject^{-1}
\cl(t),
\eeq
for all $t_0\leq t\leq t_1<T$ and every $s\in [t_0,T)$, in particular for $s=t_0$.

Because of the completeness of $C^k(M^\zzinject(t_0),g(t_0))$, 
and the fact that $\cl(t)\downto 0$ as $t\upto T$, the estimate \eqref{zap} implies the existence of a tensor $h$ on $M^\zzinject(t_0)$ such that $g(t)\to h$ smoothly on $M^\zzinject(t_0)$ as $t\upto T$.

We now allow $t_0$ to increase to $T$, which forces $\tilde\delta(t_0)$ and thus also $\mu(t_0)$ to decrease towards $0$.
According to Lemma \ref{nested_cor}, by doing this, the nested sets $M^{\zzinject(t_0)}(t_0)$ exhaust the whole of $\cu$. Each time we increase $t_0$, we can extend the tensor 
$h$, and the convergence $g(t)\to h$, to the new $M^{\zzinject(t_0)}(t_0)$, and we end up with an extended $h$  and smooth local convergence $g(t)\to h$ as $t\upto T$ on the whole of $\cu$, as required.

For each $x\in \cu$ and $t_0<T$ sufficiently close to $T$ so that $x\in M^{\mu(t_0)}(t_0)$, and hence
$x\in \tilde\delta(t_0)\thick (M,g(t_0))$, 
we may apply \eqref{est:equiv-metric} of Lemma \ref{lemma:Ck-horiz} (with $\de$ there equal to $\tilde\de$ here) and take the limit $t\upto T$ to
see that 
$$g(t_0)(x)\leq C_1 h(x).$$
Therefore $h$ is nondegenerate, and is thus itself a metric.
Clearly, as a local smooth limit of hyperbolic metrics with uniformly bounded volume, $h$ must be hyperbolic and of finite volume.

The next step is to establish that $(\cu,h)$ is complete.
If $\cu=M$, then this is clear, so assume for the moment that $M\backslash \cu$ is nonempty.
In this case, for $\ep>0$, we define the nonempty set 
$$A_\ep:=\{p\in M:\, \hat\ci(p)=\lim_{t\upto T}\inj_{g(t)}(p)\leq\ep/2\}\supset 
M\backslash\cu,$$
and note that $M\backslash A_\ep\subset\subset \cu$ (because $\hat\ci$ is continuous) and that
$A_\ep$ shrinks to $M\backslash \cu$ as $\ep\downto 0$.

Pick $x_1\in \cu$. To show that $h$ is complete, it suffices to prove that for all $Q<\infty$, there exists $\ep>0$ such that $\dist_h(x_1,A_\ep)>Q$.
Since $x_1\in \cu$, we have $\beta :=\half\hat\ci(x_1)=\half\lim_{t\upto T}\inj_{g(t)}(x_1)>0$.
Now that we have both numbers $\beta $ and $Q$, Lemma \ref{thickthindist} gives us an $\ep>0$.
The uniform convergence $\inj_{g(t)}\to\hat\ci$ 
tells us that for $t<T$ sufficiently close to $T$, we have both that $x_1\in \beta \thick(M,g(t))$ and that
$A_\ep\subset \ep\thin(M,g(t))$. Therefore Lemma \ref{thickthindist} ensures that
$\dist_{g(t)}(x_1,A_\ep)>Q$. Taking the limit $t\upto T$ allows us to conclude that
$\dist_h(x_1,A_\ep)>Q$ and hence that $h$ is complete.

Next, we consider the geometry and conformal type of $(\cu,h)$.
Pick any $\hat x\in \cu$, and let $\hat M$ be the connected component of $\cu$ containing $\hat x$.
We have that $(M,g(t_n),\hat x)$ converges to $(\hat M,h,\hat x)$ in the Cheeger-Gromov sense, for any $t_n\upto T$ (where we take the diffeomorphisms in that notion of convergence to be restrictions of the inclusion map $\hat M \to M$). But the Deligne-Mumford-type theorem \ref{Mumford} ensures that the Cheeger-Gromov limit is a closed Riemann surface with finitely many punctures, equipped with a complete hyperbolic metric with cusp ends.
Indeed, when we analyse (a subsequence of) $(M,g(t_n))$ with Theorem \ref{Mumford}, we find that $\ka\in\{0,\ldots,3(\ga-1)\}$ collars degenerate, that there are a finite number $m$  components of $\cu$, and that there is a total of $2\ka$ punctures of the corresponding closed Riemann surfaces. Moreover, we find that if $\ka\geq 1$ (i.e. if $\cu$ is not the whole of $M$) then the genus of each Riemann surface is strictly less than that of $M$.

To bound $m$, observe that by Gauss-Bonnet, the Euler characteristic of each component of $(\cu,h)$ is no higher than $-1$ since it supports a hyperbolic metric. Therefore the total Euler characteristic is no higher than $-m$ and we see that $m\leq 2(\ga-1)$. (As a side remark, we have equality here if 
$\ka$ is as large as it can be, and each component of $(\cu,h)$ is conformally a 3-times punctured sphere. As another side remark, we must have $\ka\geq m-1$, the minimum number of collars required to connect the $m$ components together.)

On the other hand, to see that $M\backslash \cu$ has precisely $\ka$ connected components, we observe that by Lemma \ref{nested_cor}, as $t\in [0,T)$ increases, the closed sets $M\backslash M^0(t)$ shrink. By Lemmata \ref{lemma:collar} and \ref{basic_inj_control}, for $t\in [0,T)$ sufficiently close to $T$, this set will have exactly $\ka$ components, with one in each of the $\ka$ degenerating collars. Therefore, the intersection of these nested sets, which by Lemma \ref{nested_cor} is precisely $M\backslash \cu$, will also have precisely $\ka$ components.


Next, we turn to the assertion in the theorem that $\inj_{g(t)}$ converges to $\ci$ uniformly, with the estimate \eqref{unif_conv}. 
By \eqref{unif_conv2}, this amounts to proving the claim that 
$\inj_{g(t)}$ converges pointwise to
$\inj_h$ as $t\upto T$ on $\cu$, i.e. that $\ci=\hat\ci$. 

To prove the claim, consider the set $K$ of points in $(\cu, h)$ a distance no more than $2\inj_h(x)+1$ from some $x\in\cu$. Since $K$ is compact, we have $g(t)\to h$ in $C^k(K)$ for every $k\in\N$ as $t\upto T$. We can thus establish the claim by mimicking  the proof of Lemma 
\ref{inj_Lip_lem}, since all curves considered there will lie within $K$.

We now turn to the estimates \eqref{h_est_lem} and \eqref{h_est_lem_h}.
Let $\de$ and $t_0$ be as in the theorem, with  $\tilde \de$ corresponding to $t_0$ as above, so that $\de>\tilde\de=\tilde\de(t_0)$.
Thus we can apply Lemma \ref{lemma:Ck-horiz} (with $\de$ there now equal to $\de$ here). 
By \eqref{est:equiv-inj} (for example) we see that 
$\de\thick(M,g(s))\subset \cu$ 
for all $s\in [t_0,T)$.
Passing to the limit $t_2\upto T$ in \eqref{est:bamberg2} yields 
\eqref{h_est_lem} immediately.

Using the claim above that $\lim_{t\upto T}\inj_{g(t)}(x)=\inj_h(x)$ 
for each $x\in \cu$, we see that for every point 
$x\in \cu$ with 
$\inj_h(x)\geq \delta$, and any $\hat\de\in [\tilde \de,\de)$, we also have
$$x\in \bigcup _{\tilde t\in[t_0,T)}\hat\delta\thick(M,g(\tilde t)),$$
and \eqref{est:bamberg2} of Lemma \ref{lemma:Ck-horiz} implies that
\beq
\abs{g(t_1)-g(t_2)}_{C^k(g(s))}(x)\leq C\hat\delta^{-1/2}(\cl(t_1)-\cl(t_2)), 
\eeq 
for any 
$t_0\leq t_1\leq t_2<T$, $s\in[t_0,T)$.
We can then take the limits $s\upto T$, $\hat\de\upto\de$, and $t_2\upto T$ to obtain
\beq
\abs{g(t_1)-h}_{C^k(h)}(x)\leq C\delta^{-1/2}\cl(t_1), 
\eeq 
with $C$ depending only on $k$ and the genus of $M$, as required for 
\eqref{h_est_lem_h}.
\end{proof}

\brmk \label{rem:h}
Having proved Theorem \ref{convergence_of_g2}, and in particular the smooth local convergence $g(t)\to h$ on $\cu$ and the convergence of the injectivity radii, we can return to Lemma \ref{lemma:Ck-horiz} to record its consequences for the limit metric $h$.
We see, precisely, that if $M$, $g(t)$, $\de$ and $t_0$ are as in Lemma \ref{lemma:Ck-horiz}, and $h$ and $\cu$ are as in Theorem \ref{convergence_of_g2}, then for all $t\in [t_0,T)$ and any $x\in M$ satisfying  $\inj_{g(\tilde t)}(x)\geq \de$ for some 
$\tilde t\in [t_0,T)$, and in particular for any $x\in \cu$ satisfying 
$\inj_{h}(x)> \de$, we have the estimates
\beq\label{est:equiv-h}
C_1^{-1}\cdot h(x)\leq g(t)(x)\leq C_1 \cdot h(x) \quad \text{ and }\quad C_2^{-1}\cdot \inj_h(x)\leq \inj_{g(t)}\leq C_2\cdot \inj_h(x)
\eeq
and 
\beq 
\label{est:Ck-with-h}
\abs{\pt g(t)}_{C^k(h)}(x)\leq C\delta^{-\half}\norm{\pt g(t)}_{L^2(M,g(t))},
\eeq
with the constants $C,C_1,C_2>0$ obtained in Lemma \ref{lemma:Ck-horiz}. 
\ermk

\begin{rmk}
\label{rmk:piecewise}
Although the results of this paper are presented for simplicity
under the assumption that the horizontal curves are smooth,
they extend easily to, for example, continuous \textit{piecewise smooth} horizontal curves, as obtained
when analysing the metric component of
a solution of Teichm\"uller harmonic map flow that has singularities
caused by the bubbling off of harmonic spheres, as in \cite{Rexistence} and \cite{global}.
We may apply the results of Section \ref{inj_rad_sect} 
on each time interval over which the curve is smooth and hence analyse the whole curve
precisely as done in Section \ref{horiz_curv_sect}. 
In particular,
Theorem \ref{convergence_of_g2} and 
Lemmata \ref{nested_cor} and \ref{lemma:Ck-horiz}
extend without change to this more general setting.
\end{rmk}


\appendix
\section{Appendix}
We collect a few fundamental properties of hyperbolic metrics.

\begin{lemma}[The collar lemma, Keen-Randol \cite{randol}] \label{lemma:collar}
Let $(M,g)$ be a closed orientable hyperbolic surface and let $\si$ be a simple closed geodesic of length $\ell$. Then there is a neighbourhood around $\si$, a so-called collar, which is isometric to the 
cylinder 
$\Col(\ell):=(-X(\ell),X(\ell))\times S^1$
equipped with the metric $\rho^2(s)(ds^2+d\theta^2)$ where 
$$\rho(s)=\frac{\ell}{2\pi \cos(\frac{\ell s}{2\pi})} 
\qquad\text{ and }\qquad  
X(\ell)=\frac{2\pi}{\ell}\left(\frac\pi2-\arctan\left(\sinh\left(\frac{\ell}{2}\right)\right) \right).$$ 
The geodesic $\si$  corresponds to the circle 
$\{s=0\}\subset \Col(\ell)$. 
\end{lemma}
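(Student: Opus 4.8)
The plan is to pass to the universal cover $\Hbb$, write the hyperbolic metric in Fermi coordinates adapted to a lift of $\si$, perform an explicit conformal change of variables to reach the stated normal form, and then verify that the resulting cylinder embeds isometrically in $M$.

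First I would fix a lift $\tilde\si\subset\Hbb$ of $\si$ --- a complete geodesic line --- and let $\ga_0$ be the primitive hyperbolic element of the Fuchsian group $\Gamma$ (where $M=\Hbb/\Gamma$) whose axis is $\tilde\si$; it translates along $\tilde\si$ by the distance $\ell$. Introduce Fermi coordinates $(t,r)$ on a neighbourhood of $\tilde\si$, with $t$ arc length along $\tilde\si$ and $r$ signed distance to $\tilde\si$ along the orthogonal geodesics; solving the Jacobi equation in curvature $-1$ (with $\cosh r$ the relevant solution) puts the hyperbolic metric in the standard form $dr^2+\cosh^2 r\,dt^2$, and $\ga_0$ acts simply by $(t,r)\mapsto(t+\ell,r)$. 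Next I would substitute $\theta=\tfrac{2\pi}{\ell}t$ (so $\theta$ has period $2\pi$) and define a new coordinate $s$ by $ds=\tfrac{2\pi}{\ell}(\cosh r)^{-1}\,dr$, i.e. $s=\tfrac{2\pi}{\ell}\Phi(r)$ with $\Phi(r):=\int_0^r(\cosh\tau)^{-1}\,d\tau=\arctan(\sinh r)$ the Gudermannian function. Using $\cosh r=(\cos\Phi(r))^{-1}$ and $\Phi(r)=\tfrac{\ell s}{2\pi}$, a direct substitution converts $dr^2+\cosh^2 r\,dt^2$ into $\rho^2(s)(ds^2+d\theta^2)$ with $\rho(s)=\tfrac{\ell}{2\pi}\cosh r=\tfrac{\ell}{2\pi\cos(\ell s/2\pi)}$. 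The slab $\{|r|<w\}$ corresponds to $\{|s|<\tfrac{2\pi}{\ell}\Phi(w)\}$, and since for positive $a,b$ one has $\arctan a+\arctan b=\tfrac{\pi}{2}$ exactly when $ab=1$, choosing $w$ with $\sinh w\cdot\sinh(\ell/2)=1$ gives $\tfrac{2\pi}{\ell}\Phi(w)=\tfrac{2\pi}{\ell}\big(\tfrac\pi2-\arctan\sinh(\ell/2)\big)=X(\ell)$, exactly the range in the statement.

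It remains to show that the quotient of the slab $\cb_w:=\{|r|<w\}$ by $\langle\ga_0\rangle$ embeds in $M$; this is the geometric heart of the lemma. Since $M$ is closed and orientable, $\Gamma$ is torsion free and cocompact, so the stabiliser of $\tilde\si$ in $\Gamma$ equals $\langle\ga_0\rangle$; and since $\si$ is simple, its lifts are pairwise disjoint. Thus any $\eta\in\Gamma\setminus\langle\ga_0\rangle$ sends $\tilde\si$ to a distinct, disjoint lift $\tilde\si'=\eta\tilde\si$, and it suffices to bound below the distance $d:=\dist_{\Hbb}(\tilde\si,\tilde\si')$ between two disjoint lifts: I claim $\sinh(d/2)\sinh(\ell/2)\geq 1$, which gives $d\geq 2w$ and hence $\cb_w\cap\eta(\cb_w)=\emptyset$, so the quotient cylinder maps isometrically onto an open neighbourhood of $\si$, namely one isometric to $\Col(\ell)$ via the coordinates above. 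To prove the claim I would place the common perpendicular of $\tilde\si$ and $\tilde\si'$ in standard position and use the classical trace identity for a product of two hyperbolic elements with disjoint axes: writing $\ga_0$ and its conjugate $\ga_0'$ (axis $\tilde\si'$, translation length $\ell$) in $\PSL(2,\R)$, one has, after possibly replacing $\ga_0'$ by $(\ga_0')^{-1}$,
\beq
\label{trace_id}
\tfrac12\,|\tr(\ga_0\ga_0')|=\big|\cosh^2(\ell/2)-\sinh^2(\ell/2)\cosh d\big|.
\eeq
Because $\Gamma$ is cocompact and torsion free, every non-identity element is hyperbolic, and $\ga_0\ga_0'\neq\id$ (it cannot be $\id$ since $\tilde\si\neq\tilde\si'$), so $|\tr(\ga_0\ga_0')|>2$. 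In \eqref{trace_id} the option $\cosh^2(\ell/2)-\sinh^2(\ell/2)\cosh d>1$ reduces to $\cosh d<1$, which is impossible; hence $\sinh^2(\ell/2)\cosh d-\cosh^2(\ell/2)>1$, i.e. $\sinh^2(\ell/2)(\cosh d-1)>2$, i.e. $\sinh^2(\ell/2)\sinh^2(d/2)>1$, which proves the claim. (Alternatively, one may simply cite Keen--Randol \cite{randol}.)

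I expect the embeddedness step to be the only genuine obstacle, and within it the sharp separation bound $\sinh(d/2)\sinh(\ell/2)\geq 1$ for disjoint lifts; the reduction of the metric to $dr^2+\cosh^2 r\,dt^2$ and the subsequent Gudermannian change of variables are routine. The one place demanding care is pinning down the exact form of the trace identity \eqref{trace_id} with the correct sign convention and confirming that the sign alternative which would yield no information cannot in fact occur.
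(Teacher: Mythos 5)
The paper does not prove this lemma at all: it is stated as the classical collar theorem and attributed to Keen and Randol \cite{randol}, with Buser \cite{Buser} cited nearby for related injectivity-radius facts, so there is no argument in the source to compare against. Your blind proposal is, however, a correct and essentially complete proof. The Fermi-coordinate reduction to $dr^2+\cosh^2 r\,dt^2$, the Gudermannian substitution $s=\tfrac{2\pi}{\ell}\arctan(\sinh r)$, $\theta=\tfrac{2\pi}{\ell}t$, and the observation that $\sinh w\sinh(\ell/2)=1$ yields $|s|<X(\ell)$ via $\arctan a+\arctan(1/a)=\pi/2$ are all exactly right and produce the stated normal form with $\rho(s)=\tfrac{\ell}{2\pi}\cosh r$. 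For embeddedness your reduction to a separation bound between disjoint lifts is the correct strategy, and I have checked the trace identity: conjugating so that the common perpendicular is the imaginary axis gives $\ga_0=\bigl(\begin{smallmatrix}\cosh(\ell/2)&\sinh(\ell/2)\\\sinh(\ell/2)&\cosh(\ell/2)\end{smallmatrix}\bigr)$ and $\ga_0'=\bigl(\begin{smallmatrix}\cosh(\ell/2)&\pm e^d\sinh(\ell/2)\\\pm e^{-d}\sinh(\ell/2)&\cosh(\ell/2)\end{smallmatrix}\bigr)$, hence $\tfrac12\tr(\ga_0\ga_0')=\cosh^2(\ell/2)\pm\sinh^2(\ell/2)\cosh d$, with the sign flipped by replacing $\ga_0'$ by $(\ga_0')^{-1}$; since $\Gamma$ is cocompact and torsion-free and $\ga_0\ga_0'$ is nontrivial (as you say, $\ga_0^{-1}$ and $\ga_0'$ have distinct axes), the relevant product is hyperbolic with $|\tr|>2$, which in the minus case gives the sharp separation $\sinh(d/2)\sinh(\ell/2)>1$ and thus $d>2w$. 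The stabiliser identification $\{\eta\in\Gamma:\eta\tilde\si=\tilde\si\}=\langle\ga_0\rangle$ follows from torsion-freeness and orientation-preservation as you indicate. This is one of the standard routes to the optimal collar width; it reaches the same inequality as the arguments in \cite{randol} and \cite[Ch.~4]{Buser} via slightly different bookkeeping.
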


We will need to understand the injectivity radius within a hyperbolic surface, both on and off the collar regions.

\begin{lemma}[{Special case of \cite[Theorem 4.1.6]{Buser}}]
\label{basic_inj_control}
Let $(M,g)$ be a closed orientable hyperbolic surface. If $x\in M$ does not lie in any collar region for which 
$\ell\leq2\arsinh(1)$, then $\inj_g(x)>\arsinh(1)$.
On the other hand, if $x$ does lie in a collar $\Col$ for which 
$\ell\leq 2\arsinh(1)$, then 
\beq
\label{inj_d_formula}
\sinh(\inj_g(x))=\cosh(\ell/2)\cosh d(x)-\sinh d(x),
\eeq
where $d(x):=\dist_g(x,\partial \Col)$ denotes the geodesic distance to an end of the collar.
\end{lemma}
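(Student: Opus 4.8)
The plan is to reduce both assertions to the geometry of the universal cover, using only the collar coordinates of Lemma~\ref{lemma:collar} together with one piece of hyperbolic trigonometry. Write $\pi\colon\Hyp\to(M,g)$ for the hyperbolic universal covering, identify $\Gamma:=\pi_1(M)$ with the deck group---a discrete torsion-free subgroup of $\PSL(2,\R)$ all of whose nontrivial elements are hyperbolic---and for $x\in M$ fix a lift $\tilde x\in\Hyp$. Since $M$ is closed, Remark~\ref{si_rmk} gives $\inj_g(x)=\tfrac12\min_{1\ne\ga\in\Gamma}\dist_{\Hyp}(\tilde x,\ga\tilde x)$, where a minimising $\ga$ may be taken primitive, and where such a $\ga$ corresponds to a shortest homotopically nontrivial loop through $x$, whose free homotopy class contains a unique closed geodesic of length equal to the translation length of $\ga$. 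The trigonometric input I would use is the standard displacement formula: a hyperbolic isometry of translation length $L$ moves a point lying at distance $t$ from its axis by a distance $D$ with $\sinh(D/2)=\cosh t\,\sinh(L/2)$, a quantity monotone increasing in both $t$ and $L$.

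For the first assertion I would argue by contraposition. Suppose $\inj_g(x)\le\arsinh(1)$, and let $\ga$ be a primitive minimiser; then the associated closed geodesic $\si$ has length $\ell\le 2\arsinh(1)$, and a closed geodesic this short is simple, so Lemma~\ref{lemma:collar} furnishes the embedded collar $\Col(\ell)$ around $\si$, of half-width $d_{\max}:=\dist_g(\si,\partial\Col(\ell))$. Since $2\inj_g(x)=\dist_{\Hyp}(\tilde x,\ga\tilde x)$, the displacement formula gives $\sinh(\inj_g(x))=\cosh(\dist_g(x,\si))\,\sinh(\ell/2)$, and hence
\[
\cosh\!\big(\dist_g(x,\si)\big)=\frac{\sinh(\inj_g(x))}{\sinh(\ell/2)}\le\frac{1}{\sinh(\ell/2)}=\sinh d_{\max}<\cosh d_{\max},
\]
using the evaluation of $d_{\max}$ recorded in the next paragraph. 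Therefore $\dist_g(x,\si)<d_{\max}$, which forces $x\in\Col(\ell)$; contrapositively, a point lying in no collar with $\ell\le 2\arsinh(1)$ satisfies $\inj_g(x)>\arsinh(1)$.

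For the second assertion, let $x\in\Col=\Col(\ell)$ with $\ell\le 2\arsinh(1)$, and let $\ga\in\Gamma$ be the primitive element representing the core geodesic $\si$. The key point is that the shortest homotopically nontrivial loop through $x$ wraps around $\si$; granting this, monotonicity in $L$ shows the minimiser is $\ga^{\pm1}$, so $2\inj_g(x)=\dist_{\Hyp}(\tilde x,\ga\tilde x)$ and the displacement formula again gives $\sinh(\inj_g(x))=\cosh(\dist_g(x,\si))\,\sinh(\ell/2)$. It remains to convert $\dist_g(x,\si)$ into $d(x)=\dist_g(x,\partial\Col)$: in the coordinates of Lemma~\ref{lemma:collar} the curves $\{\th=\mathrm{const}\}$ are geodesics running from $\si=\{s=0\}$ to $\partial\Col=\{s=\pm X(\ell)\}$, so $\dist_g(x,\si)+d(x)=d_{\max}$, while integrating $\rho$ along such a curve gives $d_{\max}=\int_0^{X(\ell)}\rho(s)\,ds=\ln\coth(\ell/4)$, whence $\cosh d_{\max}=\coth(\ell/2)$ and $\sinh d_{\max}=1/\sinh(\ell/2)$. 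Substituting $\dist_g(x,\si)=d_{\max}-d(x)$ and expanding $\cosh(d_{\max}-d(x))$ by the hyperbolic addition formula then turns $\sinh(\inj_g(x))=\cosh(d_{\max}-d(x))\,\sinh(\ell/2)$ into exactly $\sinh(\inj_g(x))=\cosh(\ell/2)\cosh d(x)-\sinh d(x)$, as claimed.

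The main obstacle is the key point used in the second assertion: that the shortest nontrivial loop through a point of $\Col(\ell)$ genuinely wraps the core rather than escaping the collar. This is the substantive content of Buser's collar theorem, and it is delicate precisely near $\partial\Col(\ell)$, where $\inj_g(x)$ need no longer lie below $\arsinh(1)$ and so the clean comparison of the first paragraph's argument is unavailable; I would simply import it from \cite[Theorem~4.1.6]{Buser}. For points with $\inj_g(x)\le\arsinh(1)$ it can instead be seen directly: the geodesic representative of the minimal loop is short, hence simple, and carries its own collar, and since distinct such collars are disjoint while both would contain $x$, that geodesic must coincide with $\si$. Everything else---the displacement formula and the evaluation $d_{\max}=\ln\coth(\ell/4)$---is routine.
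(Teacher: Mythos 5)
The paper gives no proof of this lemma at all: it is stated as a ``Special case of \cite[Theorem 4.1.6]{Buser}'' and the reader is referred to Buser. Your proposal therefore does substantially more than the paper, and it is correct. I checked the pieces: the displacement formula $\sinh(D/2)=\cosh t\,\sinh(L/2)$ is standard and correct; the integration $d_{\max}=\int_0^{X(\ell)}\rho=\ln\coth(\ell/4)$ is right, giving $\cosh d_{\max}=\coth(\ell/2)$ and $\sinh d_{\max}=1/\sinh(\ell/2)$; and expanding $\cosh(d_{\max}-d(x))\sinh(\ell/2)$ by the hyperbolic addition formula does reduce exactly to $\cosh(\ell/2)\cosh d(x)-\sinh d(x)$. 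The first assertion's contraposition, using $\sinh(\inj_g(x))\le1$ to force $\cosh(\dist_g(x,\si))\le\sinh d_{\max}<\cosh d_{\max}$, is also sound, and the point that the minimising deck transformation is the conjugate of $\ga$ whose axis is nearest $\tilde x$ (so that $\dist_\Hyp(\tilde x,\mathrm{axis}(\ga))=\dist_g(x,\si)$) is correctly implicit in your choice of primitive minimiser.

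The one step you flag as imported---that for an arbitrary $x\in\Col$ the shortest essential loop through $x$ is represented by a power of the core, even when $\inj_g(x)>\arsinh(1)$ near $\partial\Col$---is indeed the substantive content of Buser's Theorem 4.1.6, and your partial direct argument (via disjointness of collars around distinct short simple geodesics) correctly handles the sub-$\arsinh(1)$ regime but not the full collar. Since this is precisely what the paper itself is citing, importing it is the natural and honest move; you have essentially reconstructed the trigonometric skeleton of Buser's proof while isolating its topological core. This is a sharper account than the paper provides.
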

The largest that $d(x)$ can be is when $x$ lies at the centre of the collar, in which case $\inj_g(x)=\ell/2$, and we have equality in the inequality
\beq
\label{extreme_case}
\sinh(\ell/2)\sinh(d(x))\leq 1
\eeq
that follows from \eqref{inj_d_formula}.


For points contained in such collars we furthermore use:
\begin{lemma} \label{lemma:inj-collar}
Let $(M,g)$ be a closed orientable hyperbolic surface,
and let $x\in M$ be any point that is contained in a collar $\Col$ with central geodesic of length 
$\ell\leq 2\arsinh(1)$. 
Then 
\beq \label{est:inj-by-d}
\arsinh(e^{-d(x)})\leq \inj_g(x)\leq \arsinh((1+\sqrt{2})e^{-d(x)})
\eeq
where $d(x):=\dist_g(x,\partial \Col)$ as before. Furthermore,
for any $r>0$ 
\beq
\label{elementary}
\rho(x)e^{-r}\leq \rho(y)\leq \rho(x)e^r
\qquad\text{ for every } 
y\in B_g(x,r)\cap \Col.
\eeq
Moreover, $\rho$ is comparable with the injectivity radius,
\beq
\label{going_in_appendix}
\rho(y)\leq \inj_{g}(y)\leq \pi\rho(y)\qquad\text{ for all }y\in\Col,
\eeq
and so 
\beq \label{est:compare-inj-rad}
\inj_g(x)\cdot (\pi\cdot e^r)^{-1}\leq \inj_g(y)\leq \inj_g(x) \cdot (\pi\cdot e^r) \qquad\text{ for every } 
y\in B_g(x,r)\cap \Col.
\eeq
\end{lemma}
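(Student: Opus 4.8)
The plan is to reduce every assertion to the explicit collar model of Lemma \ref{lemma:collar} together with the distance formula \eqref{inj_d_formula} of Lemma \ref{basic_inj_control}, guided by one principle: trade the non-intrinsic collar coordinate $s$ for the intrinsic distance functions $\dist_g(\cdot,\si)$ (to the central geodesic $\si=\{s=0\}$) and $d(\cdot)=\dist_g(\cdot,\partial\Col)$, which are globally $1$-Lipschitz on $(M,g)$, so that no ``the geodesic stays in $\Col$'' convexity argument is needed. As preliminary bookkeeping, take $y\in\Col$ at parameter $s$ in the coordinates of Lemma \ref{lemma:collar}. Because $\rho$ depends only on $s$, the vertical segment from $y$ to $\si$ is length minimising, and a short check shows paths leaving $\Col$ are no shorter, so $\dist_g(y,\si)=\int_0^{|s|}\rho=\int_0^{\ell|s|/2\pi}\sec u\,du=\log\frac{1+\sin(\ell s/2\pi)}{\cos(\ell s/2\pi)}$; taking $\cosh$ gives the key identity
$$\rho(y)=\frac{\ell}{2\pi}\cosh\big(\dist_g(y,\si)\big),$$
and the same computation yields $\dist_g(y,\si)+d(y)=D$ with $D:=\dist_g(\si,\partial\Col)=\int_0^{X(\ell)}\rho$, where $e^{D}=\frac{\cosh(\ell/2)+1}{\sinh(\ell/2)}=\coth(\ell/4)$.

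To prove \eqref{elementary}: both $\dist_g(\cdot,\si)$ and $d(\cdot)$ are $1$-Lipschitz on $M$, so if $y\in B_g(x,r)\cap\Col$ then $|\dist_g(y,\si)-\dist_g(x,\si)|<r$; combining this with the key identity and the elementary inequality $\cosh(b+t)\le e^{|t|}\cosh b$ (valid for $b\ge0$) gives $\rho(y)\le e^{r}\rho(x)$, and exchanging $x,y$ gives $\rho(y)\ge e^{-r}\rho(x)$. To prove \eqref{est:inj-by-d}: by \eqref{inj_d_formula} and $\cosh(\ell/2)\ge1$, writing $d=d(x)$,
$$\sinh(\inj_g(x))=\cosh(\ell/2)\cosh d-\sinh d\ge\cosh d-\sinh d=e^{-d},$$
which is the lower bound. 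For the upper bound rewrite the left side as $\half\big[(\cosh(\ell/2)-1)e^{d}+(\cosh(\ell/2)+1)e^{-d}\big]$; since $d\le D$ we have $(\cosh(\ell/2)-1)e^{2d}\le(\cosh(\ell/2)-1)\coth^2(\ell/4)=2\sinh^2(\ell/4)\coth^2(\ell/4)=2\cosh^2(\ell/4)=\cosh(\ell/2)+1$, and since $\cosh(\ell/2)\le\sqrt2$ (this is the only place $\ell\le2\arsinh(1)$ enters the upper bound) this rearranges to $(\cosh(\ell/2)-1)e^{d}\le\big(1+2\sqrt2-\cosh(\ell/2)\big)e^{-d}$, so $\sinh(\inj_g(x))\le(1+\sqrt2)e^{-d}$.

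To prove \eqref{going_in_appendix}: substituting $d(y)=D-\dist_g(y,\si)$ and the value of $e^{\pm D}$ into \eqref{inj_d_formula} makes the two exponential terms of $\sinh(\inj_g(y))$ collapse (using $\cosh^2(\ell/2)-1=\sinh^2(\ell/2)$) and, with the key identity, gives the clean formula
$$\sinh(\inj_g(y))=\sinh(\ell/2)\cosh\big(\dist_g(y,\si)\big)=\frac{2\pi\sinh(\ell/2)}{\ell}\,\rho(y)=:\lambda\,\rho(y),\qquad \lambda\ge\pi$$
(the last since $\sinh t\ge t$). As $\rho(y)\le\frac{\ell}{2\pi\tanh(\ell/2)}\le\frac{\sqrt2\,\arsinh(1)}{\pi}<1$, the monotonicity of $\frac{\sinh t}{t}$ gives $\sinh\rho(y)\le\rho(y)\sinh(1)<\pi\rho(y)\le\lambda\rho(y)=\sinh(\inj_g(y))$, whence $\rho(y)\le\inj_g(y)$; for the opposite inequality, the circle $\{s\}\times S^{1}$ through $y$ is a homotopically nontrivial loop (freely homotopic to the closed geodesic $\si$) of $g$-length $2\pi\rho(y)$, so $2\inj_g(y)\le 2\pi\rho(y)$ by Remark \ref{si_rmk}. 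Finally \eqref{est:compare-inj-rad} is immediate: for $y\in B_g(x,r)\cap\Col$, using \eqref{going_in_appendix} and \eqref{elementary}, $\inj_g(y)\le\pi\rho(y)\le\pi e^{r}\rho(x)\le\pi e^{r}\inj_g(x)$ and $\inj_g(y)\ge\rho(y)\ge e^{-r}\rho(x)\ge(\pi e^{r})^{-1}\inj_g(x)$.

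The main obstacle is getting the sharp constant $\pi$ in \eqref{going_in_appendix}: a rough comparison of $\inj_g$ with $\rho$ will not do, so one really must produce the exact identity $\sinh(\inj_g(y))=\frac{2\pi\sinh(\ell/2)}{\ell}\rho(y)$, which forces the careful (but entirely routine) collar computations of the first paragraph. Their whole point is to replace the coordinate $s$ by the $1$-Lipschitz functions $\dist_g(\cdot,\si)$ and $d(\cdot)$, so that \eqref{elementary} and \eqref{est:compare-inj-rad} fall out with no worry about whether minimising geodesics between collar points stay inside $\Col$.
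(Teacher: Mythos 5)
Your proof is correct, and it takes a genuinely different route from the paper's in three places.

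First, for the upper bound of \eqref{est:inj-by-d}, the paper splits into two cases depending on whether $d(x)\leq\arsinh(1)$ (using $\cosh(\ell/2)\leq\sqrt2$ in one case and the constraint \eqref{extreme_case} in the other). You instead use the single observation $d(x)\leq D$, where $D:=\dist_g(\si,\partial\Col)$ satisfies $e^D=\coth(\ell/4)$, together with the identity $(\cosh(\ell/2)-1)\coth^2(\ell/4)=\cosh(\ell/2)+1$; this gives a unified, case-free argument. Second, for \eqref{elementary}, the paper differentiates $\log\rho$ with respect to the collar coordinate $s$ and integrates; your argument replaces $s$ by the globally $1$-Lipschitz function $\dist_g(\cdot,\si)$ via the identity $\rho(y)=\frac{\ell}{2\pi}\cosh(\dist_g(y,\si))$, which has the advantage of sidestepping any concern about whether minimising geodesics between nearby collar points stay inside $\Col$. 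Third, for \eqref{going_in_appendix}, the paper simply cites \cite[(A.8)--(A.9)]{RT3}, whereas you derive the exact formula $\sinh(\inj_g(y))=\sinh(\ell/2)\cosh(\dist_g(y,\si))=\frac{2\pi\sinh(\ell/2)}{\ell}\rho(y)$ directly by substituting $d(y)=D-\dist_g(y,\si)$ into \eqref{inj_d_formula}, from which both inequalities in \eqref{going_in_appendix} fall out. What your route buys is a self-contained, identity-driven proof organised around a single quantity, $\dist_g(\cdot,\si)$; what the paper's route buys is brevity in an appendix by leaning on earlier work. The only thing worth flagging is a minor overstatement in your aside: the relation $d(y)+\dist_g(y,\si)=D$ does still require the short ``paths that leave $\Col$ are no shorter'' verification you sketch at the start (since $d(y)$ is an intrinsic distance in $M$, not in $\Col$), so the Lipschitz device removes the collar-confinement issue for \eqref{elementary} but not for establishing the key identity itself; your sketch of that check is, however, correct.
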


\begin{proof}[Proof of Lemma \ref{lemma:inj-collar}]
By \eqref{inj_d_formula}, we have
$$\sinh(\inj_g(x))=\cosh(\ell/2)\cosh d(x)-\sinh d(x)
\geq \cosh d(x)-\sinh d(x)=e^{-d(x)},$$
which is the first part of \eqref{est:inj-by-d}.
We split the second part of \eqref{est:inj-by-d} into two cases.

{\bf Case 1:} If $d(x)\leq \arsinh(1)$, 
equivalently $\sqrt 2\sinh d(x)\leq \cosh d(x)$, then we use the bound 
$\ell\leq2\arsinh(1)$,
equivalently $\cosh(\ell/2)\leq \sqrt{2}$,
in \eqref{inj_d_formula} to find that
\beqa
\sinh(\inj_g(x))
&\leq \sqrt{2}\cosh d(x)-\sinh d(x)\\
&\leq \left(\sqrt{2}\cosh d(x)-\sinh d(x)\right)
+\left(\cosh d(x)-\sqrt 2\sinh d(x)\right)\\
&=(1+\sqrt2 )e^{-d(x)}.
\eeqa
{\bf Case 2:} If $d(x)> \arsinh(1)$, equivalently $\la:=\frac{\cosh d(x)}{\sinh d(x)}\in [1,\sqrt 2)$, then we use the bound \eqref{extreme_case} for $\ell$, equivalently $\cosh(\ell/2)\leq \la$, in \eqref{inj_d_formula} to find that
\beqa
\sinh(\inj_g(x))-(1+\sqrt2 )e^{-d(x)}
&\leq 
\la\cosh d(x)-\sinh d(x)-(1+\sqrt2 )(\cosh d(x)-\sinh d(x))\\
&=\sinh d(x)(\la-1)(\la-\sqrt 2)\leq 0,
\eeqa
which completes the proof of \eqref{est:inj-by-d}.

Next, by simple computation (cf. \cite[(A.5)]{RT3}), we know that 
$$\bigg|\frac{d}{ds}\log\rho(s)\bigg|\leq \rho(s),$$
and by integrating over $s$, we find that 
for all $y\in B_g(x,r)\cap \Col$ we obtain \eqref{elementary}.

Finally, the elementary second inequality of \eqref{going_in_appendix} is from \cite[(A.8)]{RT3} while the first inequality is from \cite[(A.9)]{RT3}.
\end{proof}

We also recall the following differential geometric version of the Deligne-Mumford compactness theorem.

\begin{thm} {\rm (Deligne-Mumford compactness, cf. \cite{Hu}.)}  \label{Mumford}
Let $(M,g_{n})$ be a sequence of closed oriented hyperbolic Riemann surfaces of genus $\gamma\geq2$. 
Then, after the selection of a subsequence,
$(M,g_{n})$ converges to a complete hyperbolic Riemannian surface in the following sense.
There exist $\ka\in\{0,\ldots,3(\ga-1)\}$, a collection of pairwise disjoint simple closed curves
$\mathscr{E}=\{\sigma^{j}, j=1,...,\ka\}$ on $M$, a complete hyperbolic metric $g_\infty$ on the surface
$\Si:=M\backslash \union_{j=1}^\ka\si^j$, and a sequence of diffeomorphisms $F_n:M\to M$ such that the following is true.

First, the surface $(\Si,g_\infty)$ is conformal to the disjoint union of a finite collection of closed Riemann surfaces $\{M_i\}$, with a total of $2\ka$ punctures. 
If $\ka\geq 1$, then the  genus of each $M_i$ is strictly less than that of $M$.
A neighbourhood of each of these punctures is isometric to a hyperbolic cusp. 
Second, for each $n\in\N$ and $j=1,...,\ka$, the simple closed curves 
$\sigma^{j}_{n}:=F_n\circ\sigma^j$ are geodesics on $(M,g_{n})$
with lengths
$\ell_{n}^{j}:=\ell(\sigma_{n}^{j})\to 0$ as $n \rightarrow \infty$,
such that
the restricted diffeomorphisms
$f_n=F_n|_\Si:\Si\rightarrow M\setminus \cup_{j=1}^\ka\sigma_{n}^{j} $ satisfy
$$(f_n)^{*}g_{n} \rightarrow g_\infty \text{ in } C_{loc}^{\infty}(\Sigma).$$
For sufficiently small $\de>0$, while the $\de\thin$ part of $(M,g_n)$ will lie within the union of the collars $\union_{j=1}^\ka\Col_n^j$ around the geodesics $\si_n^j$, the preimage under $F_n$ of the $\de\thick$ part of $(M,g_n)$  remains within an $n$-independent subset $K_\de\subset\subset \Si$.
\end{thm}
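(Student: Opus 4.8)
This is the classical Mumford/Deligne--Mumford compactness theorem phrased differential-geometrically, and the plan is to assemble it from the standard thick--thin technology; the full details are in \cite{Hu}, and I indicate the structure. First I would set up the thick--thin decomposition at the Margulis-type scale $\ep_0:=2\arsinh(1)$. For each $n$, let $\mathscr{E}_n$ be the finite collection of simple closed $g_n$-geodesics of length at most $\ep_0$; by Lemma \ref{lemma:collar} these are pairwise disjoint and carry embedded collars $\Col_n^j$, and since any system of disjoint simple closed geodesics on a closed hyperbolic surface is automatically essential and pairwise non-isotopic, and hence extends to a pants decomposition, we get $\abs{\mathscr{E}_n}\leq 3\ga-3=3(\ga-1)$. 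Passing to a subsequence I may assume $\abs{\mathscr{E}_n}=\ka$ is constant, that the isotopy class of the multicurve $\mathscr{E}_n$ equals a fixed class $\mathscr{E}=\{\si^1,\dots,\si^\ka\}$ realised on $M$, and that each length $\ell_n^j$ converges in $[0,\ep_0]$; a diagonal argument successively decreasing $\ep_0$ (equivalently, reabsorbing into the thick part those geodesics whose lengths stay bounded below) then lets me insist that in fact $\ell_n^j\to 0$ for every $j$.

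Next I would apply the Cheeger--Gromov compactness theorem on the complement $M\setminus\bigcup_j\Col_n^j$ of the degenerating collars. There the metrics $g_n$ have constant curvature $-1$, injectivity radius bounded below by $\arsinh(1)$ (Lemma \ref{basic_inj_control}), and total area $4\pi(\ga-1)$, so this family has bounded geometry and a subsequence converges, after pulling back by suitable diffeomorphisms, in $C^\infty_{loc}$ to a hyperbolic metric $g_\infty$. The delicate point is that these diffeomorphisms must be chosen compatibly with the fixed isotopy data of $\mathscr{E}$: one wants them defined on $\Si:=M\setminus\bigcup_j\si^j$, matching the fixed multicurve with the $\si_n^j$, and extendable across the collars to global diffeomorphisms $F_n:M\to M$ with $\si_n^j=F_n\circ\si^j$. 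Granting this, the restrictions $f_n=F_n|_\Si$ satisfy $(f_n)^*g_n\to g_\infty$ in $C^\infty_{loc}(\Si)$.

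Then I would analyse the collars directly. By Lemma \ref{lemma:collar}, $\Col_n^j$ is isometric to $(-X(\ell_n^j),X(\ell_n^j))\times S^1$ with metric $\rho^2(ds^2+d\theta^2)$ where $\rho(s)=\ell_n^j/(2\pi\cos(\ell_n^j s/2\pi))$; as $\ell_n^j\to 0$, on any fixed truncation this picture converges smoothly to the standard hyperbolic cusp, so each of the two ends of each degenerating collar opens in the limit into a complete hyperbolic cusp. Thus $(\Si,g_\infty)$ is a complete hyperbolic surface of finite area with cusp ends. By uniformization it is conformal to a finite disjoint union of closed Riemann surfaces $\{M_i\}$ with finitely many punctures; since cutting $M$ along each of the $\ka$ disjoint curves $\si^j$ creates exactly two cusps, and the $g_n$ are smooth across these curves so that no other cusps appear, the total number of punctures is exactly $2\ka$. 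The genus assertion is the standard topological fact that cutting a genus-$\ga$ surface along any essential simple closed curve yields pieces each of genus strictly less than $\ga$, while the relation $\chi(M)=2-2\ga=\sum_i\chi(M_i)$ together with $\chi(M_i)<0$ (hyperbolicity) bounds the number of components. Finally, for the placement statement: for $\de<\ep_0$, Lemma \ref{basic_inj_control} forces $\de\thin(M,g_n)$ into $\bigcup_j\Col_n^j$ (and in fact far into the collars), while the convergence $(f_n)^*g_n\to g_\infty$ in $C^\infty_{loc}(\Si)$ forces $\inj_{(f_n)^*g_n}\to\inj_{g_\infty}$ locally uniformly, so the preimage under $F_n$ of $\de\thick(M,g_n)$ lies in $\{x\in\Si:\inj_{g_\infty}(x)\geq\de/2\}$ for large $n$; this set is a fixed compact $K_\de\subset\subset\Si$ because $(\Si,g_\infty)$ has finite area with cusp ends.

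The main obstacle is the bookkeeping hidden in the second paragraph: organising the Cheeger--Gromov diffeomorphisms so that they respect the fixed combinatorial and isotopy type of the multicurve $\mathscr{E}$ and extend to global diffeomorphisms of $M$ carrying $\si^j$ to the short geodesics $\si_n^j$, together with the diagonal subsequence argument that upgrades ``the $\ell_n^j$ converge in $[0,\ep_0]$'' to ``the $\ell_n^j$ tend to $0$''. This is exactly where a careful treatment (as in \cite{Hu}, or the original work of Deligne and Mumford) does the real work; the remaining steps are routine once one has the collar lemma and Cheeger--Gromov compactness in hand.
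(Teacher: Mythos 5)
The paper does not prove Theorem \ref{Mumford}; it is quoted in the appendix as a known form of the Deligne--Mumford compactness theorem with the reference ``cf.\ \cite{Hu}'', and the authors defer the proof entirely to the literature. There is therefore no in-paper proof against which to compare your attempt. That said, your sketch is a sound outline of the standard argument, and, appropriately, it defers the genuinely delicate point---organising the Cheeger--Gromov diffeomorphisms so that they respect the fixed isotopy class of the multicurve $\mathscr{E}$ and extend to global diffeomorphisms $F_n\colon M\to M$ carrying $\si^j$ onto the short geodesics $\si^j_n$---to Hummel, exactly as the paper does. The remaining structure (thick/thin split at scale $2\arsinh(1)$; the bound $\ka\leq 3\ga-3$ via extension to a pants decomposition; a subsequence fixing the combinatorics plus a diagonal step reabsorbing short geodesics whose lengths do not tend to zero; Cheeger--Gromov compactness on the $\arsinh(1)$-thick region using bounded geometry and bounded area; the explicit degeneration of the collar metric $\rho^2(ds^2+d\theta^2)$ to a pair of hyperbolic cusps as $\ell\to 0$; uniformization for the conformal type; and compactness of $\{\inj_{g_\infty}\geq\de/2\}$ in a finite-area complete hyperbolic surface with cusps for the final placement claim) is all correct.

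Two minor remarks on the details. Your Euler-characteristic accounting, $\chi(M)=\sum_i\chi(M_i)$ with $\chi(M_i)<0$, conflates the closed surfaces $M_i$ with their punctured versions: the correct identity is $\chi(M)=\chi(\Si)=\sum_i(\chi(M_i)-p_i)$, where $p_i$ is the number of punctures on $M_i$, and it is the \emph{punctured} pieces that have negative Euler characteristic (a thrice-punctured sphere has $\chi(M_i)=2>0$). This does not create a gap here, since the bound on the number of components $m$ is not part of Theorem \ref{Mumford}; the paper proves $m\leq 2(\ga-1)$ separately, inside the proof of Theorem \ref{convergence_of_g2}, using precisely $\chi(\Si_i)\leq -1$. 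Second, ``cutting $M$ along each $\si^j$ creates exactly two cusps'' is slightly loose phrasing: the cusps arise only in the limit $\ell^j_n\to 0$, not from topological cutting, a point your own collar analysis already makes clear.
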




{\sc MR: 
Mathematical Institute, University of Oxford, Oxford, OX2 6GG, UK}

{\sc PT: Mathematics Institute, University of Warwick, Coventry,
CV4 7AL, UK}

\end{document}